\setheadfoot{\onelineskip}{2\onelineskip} 
\newcommand{\drpullback}{\arrow[phantom]{dr}[very near start,description]{\lrcorner}}
  \setlist{nosep}
\newtheorem*{theorem*}{Theorem}
\newtheorem{definition}{Definition}[chapter]
\newtheorem{proposition}[definition]{Proposition}   
\newtheorem{theorem}[definition]{Theorem}
\newtheorem{lemma}[definition]{Lemma}   
\newtheorem{corollary}[definition]{Corollary}   
\theoremstyle{remark}
\newtheorem*{example*}{Example}
\newtheorem{remark}[definition]{Remark}
\let\OLDthebibliography\thebibliography
\renewcommand\thebibliography[1]{
  \OLDthebibliography{#1}
  \setlength{\parskip}{4pt}
  \setlength{\itemsep}{1pt plus 0.3ex}
}
\definecolor{darkblue}{rgb}{0,0,0.7}
\DeclareMathOperator{\id}{id}
\DeclareMathOperator{\cod}{cod}
\DeclareMathOperator{\ob}{Ob}
\newcommand{\last}{\Fun{last}}
\newcommand{\cat}[1]{\mathcal{#1}}
\newcommand{\Cat}[1]{\mathsf{#1}}
\newcommand{\Fun}[1]{\operatorname{#1}}
\newcommand{\uint}{\rotatebox{15}{\!$\smallint$\!}}
\newcommand{\ourint}{\uint}
\newcommand{\smset}{\Cat{Set}}
\newcommand{\smcat}{\Cat{Cat}}
\newcommand{\decomp}{\Cat{Decomp}}
\newcommand{\sset}{\Cat{sSet}}
\newcommand{\cocolon}{:\!}
\newcommand{\To}[1]{\xrightarrow{#1}}
\renewcommand{\ss}{\subseteq}
\newcommand{\op}{^\mathrm{op}}
\newcommand{\tw}{\Fun{tw}}
\newcommand{\QQQ}{Q}
\newcommand{\sd}{\Fun{sd}}
\newcommand{\untw}[1]{\Fun{untw}_{#1}}
\newcommand{\Psh}{\Cat{Psh}}
\newcommand{\RFib}{\Cat{RFib}}
\newcommand{\el}{\Fun{el}}
\newcommand{\ds}{\cat}
\newcommand{\dsD}{\ds{D}}
\DeclareMathAlphabet{\mathbbe}{U}{bbold}{m}{n}
\newcommand{\sxcat}{\mathbbe{\Delta}}
\newcommand{\slice}[1]{_{/#1}}
\newcommand{\activeto}{\rightarrowbar}
\newcommand{\LMO}[2][over]{\ifthenelse{\equal{#1}{over}}{#2}{#2}}
\newcommand{\nerve}{N}
\newcommand{\isopil}{\stackrel{\raisebox{0.1ex}[0ex][0ex]{\(\sim\)}}%
			{\raisebox{-0.15ex}[0.28ex]{\(\rightarrow\)}}}
\newcommand{\rat}{\rightarrowtail}
\newcommand{\join}{\star}
\newcommand{\Addresses}{{
  \bigskip
  \footnotesize

  J. Kock, \textsc{Departament de Matem\`{a}tiques, 
Universitat Aut\`{o}noma de Barcelona
}\par\nopagebreak
  \textit{E-mail address}: \texttt{kock@mat.uab.cat}

  \medskip

  D.I.~Spivak, \textsc{Department of Mathematics, Massachusetts Institute of Technology
}\par\nopagebreak
  \textit{E-mail address}: \texttt{dspivak@gmail.com}

}}
\begin{document}

\title{Decomposition-space slices are toposes}

\author{
Joachim Kock
\thanks{
Kock was supported by grants
MTM2016-80439-P  (AEI/FEDER, UE) of Spain and
  2017-SGR-1725 of Catalonia.}

\and
 David I. Spivak
\thanks{
Spivak was supported by AFOSR grants 
FA9550--14--1--0031 and FA9550--17--1--0058.
}
}

\date{\vspace{-1.5cm}}

\maketitle

\begin{abstract}
  We show that the category of decomposition spaces and CULF maps is
  locally a topos.  Precisely, the slice category over any
  decomposition space $\dsD$ is a presheaf topos, namely
  $\decomp\slice{\dsD}\simeq\Psh(\tw \dsD)$.
\end{abstract}

\chapter{Introduction}


  Decomposition spaces were introduced for purposes in
  combinatorics by G\'alvez, Kock, and Tonks~\cite{GKT1,GKT2,GKT3}, 
  and purposes in homological algebra, representation theory, and geometry
  by Dyckerhoff and Kapranov~\cite{Dyckerhoff-Kapranov:1212.3563},
  who call them unital $2$-Segal spaces.
They are simplicial sets (or
simplicial $\infty$-groupoids) with a property that expresses the
ability to (co-associatively) decompose, just as in categories
one can (associatively) compose.  In particular, decomposition spaces
induce coalgebras.
The most nicely-behaved class of
morphisms between decomposition spaces is that of CULF maps.
These preserve decompositions in an appropriate
way so as to induce coalgebra homomorphisms.

Apart from 
the coalgebraic aspect,
not so much is known about the category $\decomp$ of decomposition 
spaces and CULF maps, and it may appear a bit peculiar.
For example, the product of two decomposition 
spaces as simplicial sets is not the categorical product in 
$\decomp$---the projections generally fail to be CULF.  Similarly, the terminal 
simplicial set (which is a decomposition space) is not terminal in 
$\decomp$. 
The simplicial-set product should rather be considered
as a tensor product for decomposition spaces.

The present contribution advances the categorical study of 
decomposition spaces by establishing that $\decomp$ is locally a topos, meaning
that all its slices are toposes---even presheaf toposes.  More precisely we show:

\medskip

\noindent
\textbf{Main Theorem} (Theorem~\ref{thm.main}).
{\em 
	For $\dsD$ a decomposition space, there is a natural equivalence of 
	categories
	\begin{equation}\label{eqn.decomp_psh}
	\decomp\slice{\dsD} \isopil \Psh(\tw (\dsD)) .
	\end{equation}
}


\nopagebreak
\noindent 
Here $\tw(\dsD)$ is the twisted arrow category of $\dsD$, obtained
by
edgewise subdivision---this is
readily seen to be (the nerve of) a category, cf.~\Cref{lemma.tw_rFib} below.

\pagebreak

The functor in the direction displayed in \cref{eqn.decomp_psh} is 
simply applying the twisted arrow category construction.  
The functor going in 
the other direction is the surprise.  We construct it by exploiting
an interesting
natural transformation between the category of elements of the nerve
and the twisted arrow category of a decomposition space:
\[
\begin{tikzcd}[column sep=large]
	\decomp\arrow[r, bend left=20pt, start anchor={[xshift=-4pt, 
	yshift=-2pt]}, "\el\circ \nerve"]\arrow[r, bend right=20pt, start anchor={[xshift=-4pt, yshift=2pt]}, "\tw"']
	\ar[r, phantom, "\scriptstyle\Downarrow\lambda"]&
	\smcat  .
\end{tikzcd}
\]
Its component
$\lambda_\dsD\colon \el(\nerve \dsD) \to \tw(\dsD)$ 
at a decomposition space $\dsD$ 
sends an $n$-simplex of $\dsD$ to
its long edge.
(In the category case, this map goes back to
   Thomason's notebooks \cite[p.152]{Thomason:notebook85}; it was exploited
   by G\'alvez--Neumann--Tonks~\cite{Galvez-Neumann-Tonks:2013} to exhibit
   Baues--Wirsching cohomology as a special case of
   Gabriel--Zisman cohomology.)
The crucial property of $\lambda$ is that it is a \emph{cartesian} natural 
transformation.
Our proof of the Main Theorem
describes the inverse to $\tw_\dsD$
as being essentially---modulo some technical 
translations involving elements, presheaves, and nerves---given by $\lambda_\dsD^*$.

The importance of this result resides in making a huge body of work in
topos theory available to study decomposition spaces, such as for
example the ability to define new decomposition spaces from old by
using the internal language.  
It also
opens up interesting questions such
as how the subobject classifier \cite{MacLane-Moerdijk}, isotropy group
\cite{FHS}, etc.\ of the topos $\decomp\slice{\dsD}$ relate to the
combinatorial structure of the decomposition space $\dsD$.

The Main Theorem
can be considered surprising in view of
the well-known failure of such a result for categories. Lamarche
(1996) had suggested that categories CULF over a fixed base
category $\cat{C}$ form a topos, but
Johnstone~\cite{Johnstone:Conduche'} found a counter-example,
and corrected the statement by identifying the precise---though
quite restrictive---conditions that $\cat{C}$ must satisfy.
Independent proofs of this result (and related conditions)
were provided by Bunge--Niefield~\cite{Bunge-Niefield} and
Bunge--Fiore~\cite{Bunge-Fiore}, who were motivated by CULF
functors as a notion of duration of processes, as already
considered by Lawvere~\cite{Lawvere:statecats}.
The present work also grew out of interest
in dynamical systems \cite{Schultz-Spivak-Vasilakopoulou}.
Our main theorem can be seen as a different
realization of
Lamarche's insight, allowing more general domains, 
thus indicating a role of decomposition spaces
in category theory.  
From this perspective, the point is that the
natural setting for CULF functors are decomposition spaces
rather than categories: a simplicial set CULF over a category
is a decomposition space, but not always a category. 

%
%

\chapter{Preliminaries}

\paragraph{Simplicial sets.}
  Although decomposition spaces naturally pertain to the realm of
  $\infty$-categories and simplicial $\infty$-groupoids, we work in
  the present note with $1$-categories and simplicial {\em sets}, both
  for simplicity and in order to situate decomposition spaces
  (actually just ``decomposition sets'') in the setting of classical
  category theory and topos theory.  All the results should generalize
  to $\infty$-categories (in the form of Segal spaces) and general
  decomposition spaces, and the proof ideas should also scale to this
  context, although the precise form of the proofs does not: where
  presently we exploit objects-and-arrows arguments, more uniform
  simplicial arguments are required in the $\infty$-case.  We leave
  that generalization open.

Thus our setting is the category $\sset=\Psh(\sxcat)$ of simplicial
sets, i.e.~functors $ \sxcat\op\to\smset$ and their natural
transformations. Small categories
fully faithfully embed as simplicial sets via the nerve functor
$\nerve\colon\smcat\to\sset$, and decomposition spaces are defined as
certain more general simplicial sets, as we now recall.

\paragraph{Active and inert maps.}
  The category $\sxcat$ has an active-inert factorization system:
  the {\em active maps}, written $g\colon[m]\activeto [n]$, are those that  
  preserve end-points,
  $g(0)=0$ and $g(m)=n$; the {\em inert maps}, written $f\colon [m]\rat [n]$, are
  those that are distance preserving,
  $f(i{+}1)=f(i)+1$ for $0\leq i\leq m-1$.  
  The active maps are generated by
  the codegeneracy maps $s^i\colon X_n \to X_{n+1}$ (for 
  all $0 \leq i \leq n$) and the inner coface maps $d^i\colon X_n 
  \to X_{n-1}$ (for 
  all $0 < i < n$); the inert maps are
  generated by the outer coface maps $d^\bot=d^0$ and 
  $d^\top=d^n$.
  (This orthogonal factorization system is an instance of the 
  important general notion of generic-free factorization system of
  Weber~\cite{Weber:TAC18} who referred to the two classes as generic
  and free. The active-inert terminology is due to
  Lurie~\cite{Lurie:HA}.)

\paragraph{Decomposition spaces.}
  Active and inert maps in $\sxcat$ admit pushouts along each other,
  and the resulting maps are again active and inert.  A {\em
  decomposition space} \cite{GKT1} is a simplicial set (or
  more generally a simplicial groupoid or $\infty$-groupoid) $X\colon \sxcat\op\to\smset$ that
  takes all such active-inert pushouts to pullbacks:
  \[
  X\left(
  \begin{tikzcd}
  	{[n']} \drpullback &  {[n]} \ar[l, >->]\\
		{[m']} \ar[u, ->|] & 
		{[m]} \ar[u, ->|] \ar[l, >->]
  \end{tikzcd}
  \right)
  \qquad=\quad
  \begin{tikzcd}
  	X_{n'}\ar[r]\ar[d]&
		X_n\ar[d]\\
		X_{m'}\ar[r]&
		X_{m}\ar[ul, very near end, phantom, "\lrcorner"]   .
  \end{tikzcd}
  \]

\paragraph{CULF maps.}
  A simplicial map $F\colon Y\to X$ between simplicial sets is 
  called \emph{CULF}~\cite{GKT1} when it is cartesian on active maps 
  (i.e.\ the naturality squares are pullbacks), or, equivalently,
  is right-orthogonal to all active maps $\Delta[m]\activeto\Delta[n]$.
  The CULF maps between (nerves of) categories are precisely the
  discrete Conduch\'e fibrations (see~\cite{Johnstone:Conduche'}).
  If $\dsD$ is a decomposition space (e.g.~a category) and 
  $F\colon \ds{E} \to \dsD$ is CULF,
  then also $\ds{E}$ is a decomposition space (but not in general
  a category).\label{page.CULF_over_decomp}
  
  We denote by $\decomp$ the category of decomposition spaces and 
  CULF maps.

\paragraph{Right fibrations and presheaves.}

  A simplicial map is called a {\em right fibration} if it is
  cartesian on bottom coface maps (or equivalently, right-orthogonal
  to the class of last-vertex inclusion maps $\Delta[0]\to\Delta[n]$).
  (This in fact implies that it is cartesian on all codegeneracy and
  coface maps except the top coface maps. In particular, a right
  fibration is CULF.) When restricted to categories, this notion
  coincides with that of discrete fibration. We denote by $\RFib$ the
  category of small categories and right fibrations. Note that for any
  category $\cat{C}$, the inclusion functor $\RFib\slice{\cat{C}} \to
  \smcat\slice{\cat{C}}$ is full.
  
  For any small category $\cat{C}$ there is an adjunction 
  \[
  \partial\colon\smcat\slice{\cat{C}}\leftrightarrows\Psh(\cat{C})\cocolon\ourint
  \] 
  with $\ourint\vdash\partial$; the presheaf $\partial(F)$
  associated to a functor $F\colon\cat{D}\to\cat{C}$ is given by
  taking the left Kan extension $\partial(F)\coloneq \Fun{Lan}_F(1)$
  of the terminal presheaf $1\in\Psh(\cat{D})$ along $F$.
  The notation is chosen
  because $\partial\circ\ourint\cong\id$. For any $X\in\Psh(\cat{C})$,
  we denote $\ourint X$ by $\pi_X\colon\el(X)\to\cat{C}$, and refer to
  $\el(X)$ as the \emph{category of elements}; an object of
  $\el(X)$ is a pair $(c,x)$ where $c\in\cat{C}$ and $x\in X(c)$, and
  a morphism $(c',x')\to (c,x)$ is a morphism $f\colon c'\to c$ in
  $\cat{C}$ with $X(f)(x)=x'$. The projection functor $\pi_X$ is
  always a right fibration, and $\partial$ restricts to an equivalence
  of categories $\RFib\slice{\cat{C}}\simeq\Psh(\cat{C})$. If $X'\to
  X$ is a map of presheaves, $\el(X')\to\el(X)$ is a right fibration.
  Thus we have a functor $\el\colon\Psh(\cat{C})\to\RFib$. We will
  make particular use of this for the case $\cat{C}=\sxcat$:
  \[
\el\colon\sset\to\RFib.
\]
  
\paragraph{Twisted arrow categories.}
  For $\cat{C}$ a small category, the {\em twisted arrow category}
  $\tw(\cat{C})$ (cf.~\cite{Lawvere:hyperdoc}) is the category of 
  elements of the Hom functor $\cat{C}\op\times \cat{C} \to \smset$.
  It thus 
  has the arrows of $\cat{C}$ as objects, and trapezoidal commutative diagrams
\[
\begin{tikzcd}[row sep=0, cramped]
	&\cdot\\
	\cdot \ar[ur] \\[25pt] 
	\cdot \ar[u, "f'"]\\
	&\cdot \ar[ul] \ar[uuu, "f"']
\end{tikzcd}
\]
as morphisms from $f'$ to $f$.

The twisted arrow category is a special case of 
\emph{edgewise subdivision} of a simplicial set~\cite{Segal:1973}, 
as we now recall.  Consider the functor 
\begin{eqnarray*}
	\QQQ\colon  \sxcat & \longrightarrow & \sxcat \\
	{}[n] & \longmapsto & [n]\op\join [n] = [2n{+}1]  .
\end{eqnarray*}
With the following special notation for the elements 
of the ordinal
$[n]\op\join [n] = [2n{+}1]$,
	\begin{equation}\label{eqn.twisted_n}
	\begin{tikzcd}[cramped, row sep=small, column sep=small]
		\LMO{0}\ar[r]&\LMO{1}\ar[r]&\cdots\ar[r]&\LMO{n}\\
		\LMO[under]{0'}\ar[u]&\LMO[under]{1'}\ar[l]&\cdots\ar[l]&\LMO[under]{n'} ,
		\ar[l]
	\end{tikzcd}
	\end{equation}
the functor $\QQQ$ is 
described on arrows by sending a
coface map $d^i \colon [n{-}1]\to [n]$ to the monotone map that omits the 
elements $i$ and $i'$, and by sending a codegeneracy map $s^i \colon [n] 
\to [n{-}1]$ to the monotone map that repeats both $i$ and $i'$. 

Defining $\sd\coloneq \QQQ^*\colon\sset\to\sset$, we have the commutative diagram
	\[
	\begin{tikzcd}
		\smcat\ar[r, "\tw"]\ar[d, "\nerve"']&
		\smcat\ar[d, "\nerve"]\\
		\sset\ar[r, "\sd"']&
		\sset  .
	\end{tikzcd}
	\]

Note that there is a natural transformation 
$L\colon\id_\sxcat\Rightarrow\QQQ$, 
whose component at $[n]$ is the last-segment inclusion $[n]\ss[n]\op\join[n]$. 
It induces a natural map
\[\cod_X \coloneq L^*\colon\sd(X)\to X\]
for any simplicial set $X$,
and similarly $\tw(\cat{C})\to \cat{C}$ for any category $\cat{C}$.

\begin{lemma}\label{lemma.tw_rFib}
	The functor $\sd\colon\sset\to\sset$ sends decomposition spaces 
	to categories and CULF maps to right fibrations. That is, there 
	is a unique
	functor $\tw$ making the following diagram commute:%
	\[
	\begin{tikzcd}
		\decomp\ar[r, dotted, "\tw"]\ar[d, "\nerve"']&
		\RFib\ar[d, "\nerve"]\\
		\sset\ar[r, "\sd"']&
		\sset   .
	\end{tikzcd}
	\]
\end{lemma}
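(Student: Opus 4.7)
The plan is to verify separately that $(a)$ $\sd(\dsD)$ satisfies the Segal condition for each decomposition space $\dsD$, and $(b)$ $\sd(F)$ is a right fibration of simplicial sets for each CULF map $F$. Given these, the full faithfulness of $\nerve\colon\smcat\to\sset$ together with the fact that a right fibration between nerves of categories corresponds to a discrete fibration determine the dotted functor $\tw$ uniquely.

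For $(a)$, since $\sd(\dsD)_n = \dsD_{2n+1}$, the Segal condition reads
\[
\dsD_{2n+1}\;\cong\; \dsD_3 \times_{\dsD_1}\cdots\times_{\dsD_1} \dsD_3 \quad(n\text{ copies}).
\]
I would prove this by induction on $n$; the cases $n \leq 1$ are trivial. For $n \geq 2$, the idea is to exhibit an active-inert pushout square in $\sxcat$
\[
\begin{tikzcd}
{[2n+1]} & {[2n-1]} \ar[l, >->] \\
{[3]} \ar[u, ->|] & {[1]} \ar[u, ->|] \ar[l, >->]
\end{tikzcd}
\]
in which $[2n-1]\inertto[2n+1]$ is the inert inclusion onto positions $1,\ldots,2n$ (recognizable as $\QQQ(d^n)$), while $[3]\activeto[2n+1]$ is the active inclusion onto positions $0,1,2n,2n+1$, namely $\QQQ$ applied to the inert last-spine-edge inclusion $[1]\inertto[n]$. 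The induced arrows from $[1]$ are the active long edge $[1]\activeto[2n-1]$ and the inert middle edge $[1]\inertto[3]$; computing the pushout of the underlying linear orders shows it equals $[2n+1]$. Applying the decomposition space axiom yields $\dsD_{2n+1}\cong\dsD_{2n-1}\times_{\dsD_1}\dsD_3$. Unwinding via $\QQQ$, the two matching maps to $\dsD_1$ are the last-vertex map of $\sd(\dsD)_{n-1}$ and the source face map $d_1\colon\sd(\dsD)_1\to\sd(\dsD)_0$---precisely the maps of the Segal matching $\sd(\dsD)_n = \sd(\dsD)_{n-1}\times_{\sd(\dsD)_0}\sd(\dsD)_1$. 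Composing with the inductive hypothesis for $\dsD_{2n-1}$ gives the Segal condition for $\sd(\dsD)_n$.

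For $(b)$, the face $d^0\colon\sd(X)_n\to\sd(X)_{n-1}$ is pulled back along $\QQQ(d^0)\colon[2n-1]\to[2n+1]$, which by the description of $\QQQ$ in the preamble omits the two middle elements $0'$ and $0$. This factors as a composition of two inner coface maps in $\sxcat$, hence is active. Since CULF maps are cartesian on all active simplicial operators, $\sd(F)$ is cartesian on $d^0$, i.e., $\sd(F)$ is a right fibration.

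The main technical obstacle is the inductive step of $(a)$: one must verify the claimed square is genuinely a pushout in $\sxcat$ and carefully identify the matching arrows of the decomposition pullback with the correct Segal matching arrows, which requires tracking how the active-inert factorization system in $\sxcat$ interacts with $\QQQ$. Once these identifications are in hand, the remainder is routine.
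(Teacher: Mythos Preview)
Your proof is correct and close in spirit to the paper's, though the argument for (a) is organized differently. The paper verifies the Segal condition in the form ``$d_\top$ and $d_\bot$ are cartesian against each other'': in $\sd(\dsD)$ the top face $d_n$ comes from $\QQQ(d^n)$, a composite of two \emph{outer} (inert) cofaces in $\dsD$, while the bottom face $d_0$ comes from $\QQQ(d^0)$, a composite of two \emph{inner} (active) cofaces; the relevant commuting square in $\sxcat$ is then an active-inert pushout, so the decomposition-space axiom gives the Segal pullback directly, with no induction. Your route instead establishes the spine form $\sd(\dsD)_n\cong\sd(\dsD)_{n-1}\times_{\sd(\dsD)_0}\sd(\dsD)_1$ via a different active-inert pushout and an induction on $n$. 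Both are legitimate formulations of the Segal condition and both ultimately rest on the same axiom; the paper's version is a touch more economical, while yours has the virtue of explicitly identifying the maps in the pushout as $\QQQ$ applied to specific simplicial operators, which makes the interaction of $\QQQ$ with the factorization system transparent. For (b) your argument is essentially identical to the paper's.
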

Note that the ``nerve functor'' 
$\nerve \colon \decomp 
\to \sset$ on the left of the diagram is just the inclusion, 
but it will be convenient to have it named, so as to stress
that we regard decomposition spaces as structures generalizing 
categories.
\begin{proof}
	Suppose $\dsD$ is a decomposition space; we need to check that in
	$\sd(\nerve\dsD)$, the inert face maps $d_\top$ and $d_\bot$ form
	pullbacks against each other (the Segal condition). But each top
	face map in $\sd(\nerve\dsD)$ is given by the composite of two
	outer face maps in $\nerve\dsD$ (removing $n$ and $n'$ in
	\cref{eqn.twisted_n}), and each bottom face map in
	$\sd(\nerve\dsD)$ is given by the composite of two inner face maps
	in $\nerve\dsD$ (removing $0$ and $0'$). Hence the Segal condition
	on $\sd(\nerve\dsD)$ follows from the decomposition-space
	condition on $\nerve\dsD$.
	
	If $\ds{E} \to \dsD$ is CULF, its naturality square along any active map
	is cartesian.  The bottom face maps of $\tw(\ds{E})$ are given by
	(composites of) inner---hence active---maps in $\ds{E}$, and similarly for $\dsD$, so the
	naturality square along any bottom face of $\tw(\ds{E})\to\tw(\dsD)$ is
	again cartesian, as required for it to be a right fibration.
\end{proof}

\begin{remark}
  The object part of \cref{lemma.tw_rFib} has been observed also by
  Bergner, Osorno, Ozornova, Rovelli, and
  Scheimbauer~\cite{Bergner-et.al:1807.05069}, who furthermore
  establish the following converse result (in the more general setting
  of simplicial objects $X$ in a combinatorial model category):
  {\em $\sd(X)$ is Segal if and only if $X$ is
  a decomposition space}. This interpretation of their result 
  depends on the recent result from~\cite{FGKUW} that {\em every $2$-Segal space is 
  unital}.
\end{remark}

\chapter{From category of elements to twisted arrow category}

In this section we describe the natural transformation from 
categories of elements to twisted arrow categories. First we shall need a few basic facts about $\QQQ$ and the ``last-vertex map.''

\begin{lemma}\label{lemma.Qtoppres=act}
  The functor $Q$ sends top-preserving maps to active maps.
\end{lemma}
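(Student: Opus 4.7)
The plan is to unpack what \(Q\) does on a general arrow and then check the endpoints directly, the only subtle thing being to keep track of the two labeling conventions used for the ordinal \([n]\op\join[n]=[2n{+}1]\).

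First I would record the explicit formula for \(Q\) on morphisms. The description in the excerpt specifies \(Q\) on cofaces and codegeneracies, but since these generate \(\sxcat\) one sees immediately that for any \(f\colon [m]\to [n]\), the map \(Q(f)\colon [m]\op\join[m]\to[n]\op\join[n]\) is given by
\[
Q(f)(i)=f(i), \qquad Q(f)(i')=f(i)',
\]
in the notation of \cref{eqn.twisted_n}. (Equivalently, \(Q\) is the composite of the join with the opposite functor, both of which are functorial on all of \(\sxcat\).)

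Next I would locate the two endpoints under the labeling \cref{eqn.twisted_n}. The arrows displayed there trace out the total order
\[
m' < (m{-}1)' < \cdots < 0' < 0 < 1 < \cdots < m
\]
on \([m]\op\join[m]=[2m{+}1]\); so the minimum element \(0\in[2m{+}1]\) corresponds to \(m'\), and the maximum element \(2m{+}1\in[2m{+}1]\) corresponds to \(m\). The analogous statement holds for \([2n{+}1]\).

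Finally, suppose \(f\colon[m]\to[n]\) is top-preserving, i.e.\ \(f(m)=n\). Then by the formula above
\[
Q(f)(m')=f(m)'=n', \qquad Q(f)(m)=f(m)=n,
\]
which in the standard labeling of \([2m{+}1]\) and \([2n{+}1]\) read as \(Q(f)(0)=0\) and \(Q(f)(2m{+}1)=2n{+}1\). Hence \(Q(f)\) is active, as required. There is no real obstacle here; the only thing to watch is the translation between the \(i,i'\) labeling and the standard \(0,1,\dots,2n{+}1\) labeling, since the minimum of \([n]\op\join[n]\) is \(n'\) (not \(0'\)).
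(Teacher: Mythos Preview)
Your proof is correct and follows essentially the same approach as the paper: both identify \(Q(f)\) with \(f\op\join f\) and check directly that it preserves the endpoints \(m'\) and \(m\) when \(f(m)=n\). The paper's version is simply a one-line compression of what you have written out in detail with explicit attention to the labeling conventions.
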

\begin{proof}
  If $f\colon  [m] \to [n]$ preserves the top element (that is, $f(m)=n$),
  then $Q(f)$ is the map $f\op\join f \colon  [m]\op\join [m] \to 
  [n]\op\join[n]$, which clearly preserves both the bottom element
  $m'$ and the top element $m$.
\end{proof}

The ``last-vertex map'' of \cite{Waldhausen} is a natural
transformation $\last\colon\nerve \circ \el \Rightarrow \id_{\sset}$, which sends 
a simplex $\sigma$ in $(\nerve \el X)_0$ to its last vertex.  Its
value in higher simplicial degree is given by the following lemma.
\begin{lemma}\label{lemma.last_vertex}
For any $k$, let $\varphi \in (\nerve\sxcat)_k$ denote a sequence of maps $[n_0]\stackrel{f_1}\to[n_1]\stackrel{f_2}\to\cdots\stackrel{f_k}\to[n_k]$ in $\sxcat$.  Then there is a unique commutative diagram $B(\varphi)$ of the form
\[\begin{tikzcd}[column sep=large]
	{[0]}\ar[r, "d^\top"]\ar[d, "\beta(n_0)"']&
	{[1]}\ar[r, "d^\top"]\ar[d, "\beta(f_1)"']&
	\cdots\ar[r, "d^\top"]&
	{[k]}\ar[d, "\beta(f_k)"']\\
	{[n_0]}\ar[r, "f_1"']&
	{[n_1]}\ar[r, "f_2"']&
	\cdots\ar[r, "f_k"']&
	{[n_k]}
\end{tikzcd}\]
for which all the vertical maps are top preserving,
and all the maps in the top row are $d^\top$.
\end{lemma}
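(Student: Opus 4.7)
The plan is to construct the diagram $B(\varphi)$ by induction on $k$, at each step showing that both existence and uniqueness of $\beta(f_k)$ are forced by the two constraints (top-preservation of the verticals and $d^\top$ along the top row).

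For the base case $k=0$, the diagram consists of the single map $\beta(n_0)\colon [0]\to [n_0]$, and the unique top-preserving such map is the one picking out $n_0\in [n_0]$.

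For the inductive step, suppose we have uniquely produced $B$ on the initial segment $[n_0]\To{f_1}\cdots\To{f_{k-1}}[n_{k-1}]$; we must show there is a unique top-preserving $\beta(f_k)\colon [k]\to[n_k]$ making the rightmost square commute. Since $d^\top\colon [k{-}1]\inj[k]$ is the inclusion of $\{0,1,\ldots,k{-}1\}$ into $\{0,1,\ldots,k\}$, commutativity of that square forces
\[
\beta(f_k)(j) \;=\; f_k\bigl(\beta(f_{k-1})(j)\bigr) \qquad \text{for } 0\le j\le k-1,
\]
while top-preservation forces $\beta(f_k)(k)=n_k$. This determines $\beta(f_k)$ uniquely as a function $[k]\to[n_k]$, so it only remains to verify that it is monotone. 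Monotonicity on $\{0,\ldots,k-1\}$ follows from monotonicity of $\beta(f_{k-1})$ and $f_k$; the only new comparison is $\beta(f_k)(k-1) \le \beta(f_k)(k)$, which unfolds to $f_k(\beta(f_{k-1})(k-1)) \le n_k$. Since $\beta(f_{k-1})$ is top-preserving, $\beta(f_{k-1})(k-1)=n_{k-1}$, and $f_k(n_{k-1})\le n_k$ is immediate. Hence $\beta(f_k)$ is a well-defined monotone top-preserving map, completing both existence and uniqueness.

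There is no real obstacle here; the argument is pure bookkeeping on finite ordinals. The only thing to keep straight is the indexing convention for $d^\top$ (namely that it is the order-preserving injection missing the top element), which is what allows the commutativity constraint on $\{0,\ldots,k-1\}$ and the top-preservation constraint at $k$ to be compatible rather than in conflict.
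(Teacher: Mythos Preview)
Your proof is correct and follows exactly the inductive strategy the paper intends: the paper omits the argument as ``straightforward'' and points to the proof of the next lemma, which is the same induction (base case $k=0$, then a unique-extension step along $d^\top$) carried out in the active/$Q$ setting. Your version simply spells out the extension step as completing the rightmost square rather than filling a triangle, which amounts to the same thing.
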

The proof is straightforward.  For a hint, see the proof provided for
the next lemma, where we give a two-sided refinement of this construction.

\begin{lemma}
	\label{lemma.fund_alt}
For any $k$, let $\varphi \in (\nerve\sxcat)_k$ denote a sequence of maps $[n_0]\stackrel{f_1}\to[n_1]\stackrel{f_2}\to\cdots\stackrel{f_k}\to[n_k]$ in $\sxcat$.  Then there is a unique commutative diagram $A(\varphi)$ of the form
\begin{equation}\label{eqn.fund_lemma}
\begin{tikzcd}[column sep=large]
	\QQQ[0]\ar[r, "\QQQ\left(d^\top\right)"]\ar[d, ->|, "\alpha(n_0)"']&
	\QQQ[1]\ar[r, "\QQQ\left(d^\top\right)"]\ar[d, ->|, "\alpha(f_1)"']&
	\cdots\ar[r, "\QQQ\left(d^\top\right)"]&
	\QQQ[k]\ar[d, ->|, "\alpha(f_k)"']\\
	{[n_0]}\ar[r, "f_1"']&
	{[n_1]}\ar[r, "f_2"']&
	\cdots\ar[r, "f_k"']&
	{[n_k]}
\end{tikzcd}
\end{equation}
i.e.\ for which all the vertical maps are active and all the top maps are of the form $\QQQ(d^\top)$.
\end{lemma}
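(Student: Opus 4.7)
The plan is to prove the lemma by first establishing uniqueness from the required constraints, and then confirming existence via an explicit formula whose monotonicity follows from elementary order-theoretic reasoning.

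For uniqueness, I would proceed by induction on $k$. The base case: $\alpha(n_0)\colon Q[0]=[1]\to[n_0]$ is required to be active, so $0'\mapsto 0$ and $0\mapsto n_0$, and this is forced. For the inductive step, assume $\alpha(f_{j-1})$ is uniquely determined; the key observation is that the map $\QQQ(d^\top)\colon \QQQ[j-1]\to\QQQ[j]$ omits precisely the two extreme elements $j$ and $j'$, sending element $i$ (resp.\ $i'$) of $\QQQ[j-1]$ to element $i$ (resp.\ $i'$) of $\QQQ[j]$ for $0\leq i\leq j-1$. Requiring $\alpha(f_j)$ to be active pins down its values on the extremes, $\alpha(f_j)(j')=0$ and $\alpha(f_j)(j)=n_j$, and the commutativity $\alpha(f_j)\circ \QQQ(d^\top)=f_j\circ\alpha(f_{j-1})$ pins down its values on the remaining elements. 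So $\alpha(f_j)$ is forced.

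For existence, I would write out the resulting explicit formula: for $0\leq i\leq j$,
\[
\alpha(f_j)(i) = (f_j\circ f_{j-1}\circ\cdots\circ f_{i+1})(n_i), \qquad \alpha(f_j)(i') = (f_j\circ f_{j-1}\circ\cdots\circ f_{i+1})(0),
\]
with the convention that an empty composite is the identity (so the endpoint values are $n_j$ and $0$, as required for activity). I would then verify that this defines a monotone function on $\QQQ[j]$, whose order is $j'< (j{-}1)'<\cdots<0'<0<1<\cdots<j$. The required inequalities split into three families: $\alpha(f_j)(i')\leq \alpha(f_j)((i{-}1)')$ follows from $0\leq f_i(0)$ and monotonicity of $f_j\circ\cdots\circ f_{i+1}$; $\alpha(f_j)(0')\leq \alpha(f_j)(0)$ follows from $0\leq n_0$; and $\alpha(f_j)(i)\leq\alpha(f_j)(i{+}1)$ follows from $f_{i+1}(n_i)\leq n_{i+1}$. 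Activity of each $\alpha(f_j)$ is built into the formula, and commutativity of the squares is immediate from the definition.

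I do not anticipate any genuine obstacle: the content is essentially a careful bookkeeping exercise, and the only thing that could go wrong is miscounting elements of $\QQQ[j]$ or mis-tracking which monotonicity bound is needed where. The conceptual point to highlight is that the two ``strips'' of $\QQQ[j]$---the primed elements coming from $[j]\op$ and the unprimed elements coming from $[j]$---get assigned the iterated composites of the $f_i$ applied to $0$ and to the top elements $n_i$ respectively, which is exactly the two-sided refinement of the last-vertex construction of \Cref{lemma.last_vertex}.
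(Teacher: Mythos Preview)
Your proof is correct and follows essentially the same inductive strategy as the paper. The paper's proof is terser: it handles the base case identically and then dispatches the inductive step by invoking the ``obvious fact'' that any map $h\colon Q[m]\to[n]$ admits a unique active extension along $Q(d^\top)\colon Q[m]\to Q[m{+}1]$---which is precisely the content you spell out (extremes forced by activity, interior forced by commutativity). Your explicit formula and monotonicity check are a welcome elaboration of what the paper leaves implicit, but not a different route.
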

\begin{proof}
When $k=0$ it is clear: there is a unique active map
$\QQQ[0]=[1]\activeto[n_0]$; call it $\alpha(n_0)$. The result now
follows from the obvious fact that for any map $h\colon \QQQ[m]\to[n]$ in
$\sxcat$, the following solid arrow diagram admits a unique active
extension as shown:
\[
\begin{tikzcd}[column sep=large]
	\QQQ[m]\ar[r, "\QQQ(d^\top)"]\ar[dr, "h"']&\QQQ[m{+}1]\ar[d, dotted, ->|]\\
	&{[n]}
\end{tikzcd}
\qedhere
\]
\end{proof}

\begin{remark}\label{rem.long_edge}
  In the $k=1$ case of \cref{lemma.fund_alt}, 
\[
\begin{tikzcd}[column sep=large]
	{Q[0]}
    \ar[d, ->|, "\alpha\left(m\right)"']
	\ar[r, "\QQQ\left(d^\top\right)"]    &
	{Q[1]}
    \ar[d, ->|, "\alpha\left(f\right)"]
    \\
	{[m]}\ar[r, "f"'] &
	{[n]}    ,
\end{tikzcd}
\]
the map $\alpha(f) \colon [3] \activeto [n]$
is described explicitly for any $f\colon[m]\to[n]$ as 
follows:
\[
0\mapsto 0, \qquad 1 \mapsto f(0), \qquad 2\mapsto f(m), \qquad 3 \mapsto n .
\]
In particular, if $f$ itself is active, so that $f(0)=0$ 
and $f(m)=n$, then 
the map $\alpha(f)$ is 
doubly degenerate, in the sense that it factors through
 $Q(s^0)$ as in the following diagram:
\[
\begin{tikzcd}[column sep=large]
	{[1]}\ar[d, ->|, "\alpha\left(m\right)"']&
	{[3]}\ar[l, ->|, "\QQQ\left(s^0\right)"']\ar[d, ->|, "\alpha\left(f\right)"]\\
	{[m]}\ar[r, ->|, "f"']&
	{[n]}    .
\end{tikzcd}
\]
\end{remark}


\begin{lemma}\label{lemma.Nel_tw}
  There is a natural transformation
  $\Lambda\colon\nerve\circ\el\Rightarrow\sd$ of functors
  $\sset\to\sset$. The degree-$0$ component $(\nerve \el X)_0 \to 
  (\sd X)_0$ sends any simplex in $X$ to its long edge.
\end{lemma}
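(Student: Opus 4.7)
The plan is to define $\Lambda$ level-wise using the active maps from \Cref{lemma.fund_alt}. Unwinding the nerve-of-elements construction, a $k$-simplex of $\nerve\el X$ amounts to a pair $(\varphi,\sigma)$, where $\varphi=\bigl([n_0]\stackrel{f_1}{\to}\cdots\stackrel{f_k}{\to}[n_k]\bigr)\in(\nerve\sxcat)_k$ and $\sigma\in X_{n_k}$ is the top-level element (the lower $\sigma_j=(f_{j+1}\circ\cdots\circ f_k)^*\sigma$ being forced). Writing $\alpha(\varphi)\colon Q[k]\activeto[n_k]$ for the rightmost vertical in the diagram $A(\varphi)$ of \Cref{lemma.fund_alt}, I set
\[
\Lambda_X^{(k)}(\varphi,\sigma) \coloneqq \alpha(\varphi)^*(\sigma) \;\in\; X_{Q[k]} = (\sd X)_k.
\]
In degree $0$, $\alpha(\varphi)\colon[1]\activeto[n_0]$ is the unique active map ($0\mapsto 0$, $1\mapsto n_0$), so $\Lambda_X^{(0)}$ sends $\sigma\in X_{n_0}$ to its long edge, matching the stated description.

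Naturality in $X$ is immediate: the map $\alpha(\varphi)\in\sxcat$ does not depend on $X$, so pullback along it commutes with any simplicial map $F\colon X\to X'$. The content of the lemma is the simpliciality of $\Lambda_X$: for any $\theta\colon[k']\to[k]$ in $\sxcat$, I must show $Q(\theta)^*\circ\Lambda_X^{(k)}=\Lambda_X^{(k')}\circ\theta^*$. Since $\theta^*$ on $\nerve\el X$ sends $(\varphi,\sigma)\mapsto(\theta^*\varphi,\,h_{\theta(k')}^*\sigma)$, where $h_j\coloneqq f_k\circ\cdots\circ f_{j+1}$, this reduces to the identity
\[
\alpha(\varphi)\circ Q(\theta) \;=\; h_{\theta(k')}\circ\alpha(\theta^*\varphi)
\]
as maps $Q[k']\to[n_k]$.

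To verify this identity, I would first unwind the recursion in the proof of \Cref{lemma.fund_alt} into an explicit formula, generalising \Cref{rem.long_edge}: letting $\psi\colon[k]\to\sxcat$ denote the functor encoded by $\varphi$, an induction on $k$ yields
\[
\alpha(\varphi)(J') \;=\; \psi(J\le k)(0), \qquad \alpha(\varphi)(J) \;=\; \psi(J\le k)(n_J)
\]
for every vertex $J\in[k]$ of $Q[k]$, and analogously for $\alpha(\theta^*\varphi)$. Combined with $Q(\theta)(J')=\theta(J)'$ and $Q(\theta)(J)=\theta(J)$, both sides of the required identity then evaluate to $\psi(\theta(J)\le k)(0)$ on $J'\in Q[k']$ and to $\psi(\theta(J)\le k)(n_{\theta(J)})$ on $J$. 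The main obstacle is purely bookkeeping: aligning the action of $\theta$ on $(\varphi,\sigma)$ (which re-indexes the sequence and additionally pulls $\sigma$ back along its residual tail) with that of $Q(\theta)$ on $X_{Q[k]}$; once the explicit formula for $\alpha$ is in hand the check is mechanical. A more conceptual derivation via the uniqueness clause in \Cref{lemma.fund_alt} would be attractive, but the restriction of $A(\varphi)$ along $\theta$ need not have all its vertical maps active, so uniqueness does not apply directly.
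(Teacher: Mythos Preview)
Your construction of $\Lambda$ is exactly the one the paper gives, and your verification is correct. The difference is only in how simpliciality is established: you derive an explicit formula for $\alpha(\varphi)$ and check the identity $\alpha(\varphi)\circ Q(\theta)=h_{\theta(k')}\circ\alpha(\theta^\ast\varphi)$ by direct computation, whereas the paper appeals to the uniqueness clause of \Cref{lemma.fund_alt}.

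Contrary to your final remark, uniqueness \emph{does} apply directly, and it is worth seeing how. For $\theta\colon[k']\to[k]$ and each $j\le k'$, let $\theta_j\colon[j]\to[\theta(j)]$ be the corestriction of $\theta$ to its image on $[j]$; this is always top-preserving, so $Q(\theta_j)$ is active by \Cref{lemma.Qtoppres=act}. Writing $\alpha_m$ for the $m$th vertical of $A(\varphi)$, set $\beta_j\coloneqq\alpha_{\theta(j)}\circ Q(\theta_j)\colon Q[j]\activeto[n_{\theta(j)}]$. These $\beta_j$, together with the maps $Q(d^\top)$ along the top, form a commutative ladder over $\theta^\ast\varphi$ (the required squares reduce to those of $A(\varphi)$), and the uniqueness in \Cref{lemma.fund_alt} forces $\beta_{k'}=\alpha(\theta^\ast\varphi)$. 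Combined with the rectangle of $A(\varphi)$ between columns $\theta(k')$ and $k$, which gives $\alpha(\varphi)\circ Q(\iota)=h_{\theta(k')}\circ\alpha_{\theta(k')}$ for the inclusion $\iota\colon[\theta(k')]\hookrightarrow[k]$, and the factorisation $\theta=\iota\circ\theta_{k'}$, you obtain precisely your identity. So the ``restriction'' that makes uniqueness bite is not the naive one (which would have the wrong-shaped top row), but the one obtained by precomposing each $\alpha_{\theta(j)}$ with the active $Q(\theta_j)$.

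Both routes are fine; yours trades a short diagrammatic observation for an explicit formula, which has the side benefit of making \Cref{rem.long_edge} a special case.
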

\begin{proof}
  A $k$-simplex in $\nerve\el(X)$ is a sequence
  $[n_0]\to\cdots\to[n_k]$ together with an $n_k$-simplex $\sigma\in
  X_{n_k}$.  By \cref{lemma.fund_alt}, there is an induced map
  $\QQQ[k]\activeto[n_k]$, and hence $\Delta[k]\to\sd(X)$.  Naturality
  follows from the uniqueness of the diagram in
  \eqref{eqn.fund_lemma}.
\end{proof}

\begin{remark}
The two arguments in lemmas~\ref{lemma.last_vertex} 
and \ref{lemma.fund_alt} can be compared by means of the
last-segment inclusion $L_m \colon [m] \to \QQQ[m]$, by which 
$\cod$ was defined. One finds
that $\Lambda$ mediates between the natural transformations $\last$ and $\cod$
as follows:
\[\begin{tikzcd}[column sep=small]
\nerve \circ \el \ar[rr, Rightarrow, "\Lambda"] \ar[rd, Rightarrow, 
"\last"'] && \sd \ar[ld, Rightarrow, "\cod"] \\
& \id  .&
\end{tikzcd}\]
\end{remark}

\begin{lemma}\label{lemma.Waldhausen}
For any sequence $\varphi \in (\nerve \sxcat)_k$ as in 
lemmas~\ref{lemma.last_vertex} 
and \ref{lemma.fund_alt}, we have
\[
Q(B(\varphi)) = A(Q(\varphi)) .
\]
\end{lemma}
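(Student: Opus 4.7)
The plan is to exploit the uniqueness clause of \cref{lemma.fund_alt} to reduce the identity to a compatibility already recorded in \cref{lemma.Qtoppres=act}. Both $Q(B(\varphi))$ and $A(Q(\varphi))$ are commutative diagrams of the same shape as \eqref{eqn.fund_lemma}: their bottom rows are both the sequence $Q(f_1),\ldots,Q(f_k)$ (for $Q(B(\varphi))$ because $Q$ is applied to the bottom row $f_1,\ldots,f_k$ of $B(\varphi)$, and for $A(Q(\varphi))$ by the very definition of $A$ applied to $Q(\varphi)$); their top rows are both $Q(d^\top),\ldots,Q(d^\top)$ (for $Q(B(\varphi))$ because $Q$ is applied to the top row $d^\top,\ldots,d^\top$ of $B(\varphi)$, and for $A(Q(\varphi))$ by definition). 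So the two diagrams differ only in their vertical maps.

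By \cref{lemma.fund_alt}, the diagram $A(Q(\varphi))$ is the \emph{unique} commutative filler whose vertical arrows are all active. Hence it suffices to verify that the vertical arrows of $Q(B(\varphi))$ are likewise active. But the vertical arrows of $B(\varphi)$ are $\beta(n_0),\beta(f_1),\ldots,\beta(f_k)$, which by \cref{lemma.last_vertex} are top-preserving; applying \cref{lemma.Qtoppres=act} to each, we conclude that $Q(\beta(n_0)),Q(\beta(f_1)),\ldots,Q(\beta(f_k))$ are active. Uniqueness then forces $Q(B(\varphi))=A(Q(\varphi))$.

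The only slightly delicate point is bookkeeping: one must check that the indexing of top and bottom rows in $Q(B(\varphi))$ genuinely matches that of $A(Q(\varphi))$, i.e.\ that $Q$ preserves the particular coface maps $d^\top$ on the nose (so both top rows read ``$Q(d^\top)$''), and that applying $Q$ to the bottom row $[n_0]\to\cdots\to[n_k]$ of $B(\varphi)$ yields the same sequence $Q(\varphi)=Q[n_0]\to\cdots\to Q[n_k]$ that is fed into $A$. Both are immediate from functoriality of $Q$, so no real obstacle arises; the entire argument is a one-line invocation of uniqueness once the shape of the two diagrams is identified.
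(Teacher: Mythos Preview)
Your proof is correct and follows exactly the same approach as the paper: identify $Q(B(\varphi))$ as a diagram with bottom row $Q(\varphi)$ and active vertical maps (via \cref{lemma.last_vertex} and \cref{lemma.Qtoppres=act}), then invoke the uniqueness clause of \cref{lemma.fund_alt}. The only difference is that you spell out the bookkeeping about the top and bottom rows more explicitly than the paper does.
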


\begin{proof}
  Just observe that $Q(B(\varphi))$ is a diagram with bottom row
  $Q(\varphi)$, and it satisfies the condition of
  \cref{lemma.fund_alt}, as a consequence of \cref{lemma.last_vertex}
  and \cref{lemma.Qtoppres=act}.  Therefore, by uniqueness in
  \cref{lemma.fund_alt}, the diagram must be $A(Q(\varphi))$.
\end{proof}

	For $X$ a simplicial set with corresponding right fibration
	$p\colon\el (X) \to\sxcat$, there is a functor 
	$\omega_X\colon\el (\sd X)\Rightarrow \el (X)$ induced by pullback along $\QQQ$:
\begin{equation}\label{eqn.name_omega}
\begin{tikzcd}
	\el(\sd X)\ar[r, "\omega_X"]\ar[d, "\sd p"'] \drpullback &
	\el(X)\ar[d, "p"]\\
	\sxcat\ar[r, "\QQQ"']&
	\sxcat   
\end{tikzcd}
\end{equation}
It sends an $n$-simplex in $\sd (X)$ to the 
corresponding $(2n{+}1)$-simplex in $X$.  These functors assemble 
into a natural transformation
\[
\omega  \colon\el\circ\sd\Rightarrow \el .
\]

\begin{lemma}
	\label{lemma.last_nerve_Lambda}
The following diagram in the category of endofunctors on $\sset$ commutes:
\[
\begin{tikzcd}[column sep=small]
  &  \nerve\circ \el\circ\sd \ar[dr, Rightarrow, "\last_{\sd}"] 
  \ar[ld, Rightarrow, "\nerve\omega"'] & \\
	\nerve\circ\el\ar[rr, Rightarrow, "\Lambda"']&& 
	\sd  .
\end{tikzcd}
\]

\end{lemma}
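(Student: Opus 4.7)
The plan is to fix a simplicial set $X$ and a simplicial degree $k$, and chase an arbitrary $k$-simplex of $\nerve\circ\el\circ\sd(X)$ around both sides of the triangle. Such a $k$-simplex is a pair $(\varphi,\sigma)$, where $\varphi = ([n_0]\stackrel{f_1}\to\cdots\stackrel{f_k}\to[n_k])$ is a $k$-simplex of $\nerve\sxcat$ and $\sigma\in(\sd X)_{n_k}$; by the definition $\sd = \QQQ^*$, the latter is the same datum as an element of $X_{\QQQ[n_k]}$.

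First I would unwind the top-right path. By the pullback description of $\omega$ in \eqref{eqn.name_omega}, the functor $\omega_X$ sends $([n],\sigma)\in\el(\sd X)$ to $(\QQQ[n],\sigma)\in\el(X)$, and acts on morphisms by $\QQQ$. Thus $(\nerve\omega)_X$ carries $(\varphi,\sigma)$ to $(\QQQ\varphi,\sigma)\in(\nerve\el X)_k$, where $\QQQ\varphi$ is the sequence $\QQQ[n_0]\stackrel{\QQQ f_1}\to\cdots\stackrel{\QQQ f_k}\to \QQQ[n_k]$. Applying $\Lambda_X$ as constructed in \Cref{lemma.Nel_tw} (unpacked via \Cref{lemma.fund_alt}) produces the $k$-simplex $X(\alpha(\QQQ f_k))(\sigma) \in X_{\QQQ[k]} = (\sd X)_k$, where $\alpha(\QQQ f_k)\colon \QQQ[k]\activeto \QQQ[n_k]$ is the rightmost vertical of $A(\QQQ\varphi)$.

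Next I would unwind the left-bottom path. By \Cref{lemma.last_vertex}, $\last_{\sd X}$ sends $(\varphi,\sigma)$ to $(\sd X)(\beta(f_k))(\sigma)\in(\sd X)_k$, where $\beta(f_k)\colon [k]\to [n_k]$ is the rightmost vertical of the top-preserving diagram $B(\varphi)$; under the identification $\sd X = \QQQ^* X$ this is $X(\QQQ(\beta(f_k)))(\sigma) \in X_{\QQQ[k]}$. Comparing the two outputs, commutativity of the triangle reduces exactly to the identity $\QQQ(\beta(f_k)) = \alpha(\QQQ f_k)$, and this is precisely \Cref{lemma.Waldhausen}: the asserted equality $\QQQ(B(\varphi)) = A(\QQQ\varphi)$ of whole diagrams specializes on rightmost columns to the required identity. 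Naturality in $X$ is then automatic from the naturality of $\last$, $\omega$, and $\Lambda$. The only real work---more bookkeeping than genuine obstacle---is to keep straight the identification $(\sd X)_n = X_{\QQQ[n]}$ at each step and to correctly read off the rightmost vertical of each of the two diagrams $B(\varphi)$ and $A(\QQQ\varphi)$; once that is done, \Cref{lemma.Waldhausen} finishes the argument.
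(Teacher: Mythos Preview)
Your proposal is correct and follows essentially the same approach as the paper: both compute the two composites on a generic $k$-simplex of $\nerve\el(\sd X)$ and reduce the comparison to the identity $\QQQ(B(\varphi))=A(\QQQ\varphi)$ of \cref{lemma.Waldhausen}. The paper phrases the computation via the $\QQQ_!\dashv\QQQ^*$ adjunction and dotted-arrow diagrams, while you do the equivalent element-level chase using the identification $(\sd X)_n=X_{\QQQ[n]}$; the key step is identical. One cosmetic slip: you have the labels ``top-right'' and ``left-bottom'' swapped relative to the displayed triangle (the composite $\Lambda\circ\nerve\omega$ is the left-then-bottom path, and $\last_{\sd}$ is the single right-hand arrow), but the mathematics is unaffected.
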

\begin{proof}
  Given an $k$-simplex $x\in \nerve\el(\sd X)_k$, i.e.\ a sequence
  $[n_0]\to\cdots\to[n_k]$ and morphism $\Delta[n_k]\to \sd (X)$,
  applying $\last_{\sd}$ returns the dotted arrow as shown
  (see \cref{lemma.last_vertex}):
  \[\begin{tikzcd}[column sep=large]
	{\Delta[0]}\ar[r, "{\Delta[d^\top]}"]\ar[d]&
	{\Delta[1]}\ar[r, "{\Delta[d^\top]}"]\ar[d]&
	\cdots\ar[r, "{\Delta[d^\top]}"]&
	{\Delta[k]}\ar[d]\ar[dr, dotted, "\last_{\sd X}"]\\
	{\Delta[n_0]}\ar[r]&
	{\Delta[n_1]}\ar[r]&
	\cdots\ar[r]&
	{\Delta[n_k]}\ar[r]&
	Q^* X   .
\end{tikzcd}\]
Here we have written $Q^*X$ instead of the usual $\sd (X)$ 
because we will make use of its
adjoint $Q_!$,
which restricts to $Q$ on representables, i.e.\ $Q_!\Delta[n]=\Delta[2n{+}1]$.
Applying instead
$\Lambda\circ N\omega$ returns the dotted arrow as shown:
  \[\begin{tikzcd}[column sep=large]
	{Q_!\Delta[0]}\ar[r, "Q_!{\Delta[d^\top]}"]\ar[d, ->|]&
	{Q_!\Delta[1]}\ar[r, "Q_!{\Delta[d^\top]}"]\ar[d, ->|]&
	\cdots\ar[r, "Q_!{\Delta[d^\top]}"]&
	{Q_!\Delta[k]}\ar[d, ->|]\ar[dr, dotted, "\Lambda\circ N\omega(X)"]\\
	{Q_!\Delta[n_0]}\ar[r]&
	{Q_!\Delta[n_1]}\ar[r]&
	\cdots\ar[r]&
	{Q_!\Delta[n_k]}\ar[r]&
	X   .
\end{tikzcd}\]
To see that these two dotted maps represent the same $k$-simplex of 
$\sd (X)$ via the $Q_!\dashv Q^*$ adjunction, we invoke \cref{lemma.Waldhausen} and the uniqueness from \cref{lemma.fund_alt}.
\end{proof}

\begin{proposition}\label{prop.cart_nt}
On decomposition spaces, the map $\Lambda$ restricts to a cartesian natural 
transformation $\lambda\colon\el\circ\nerve\Rightarrow\tw$:
\[
\begin{tikzcd}[sep=large]
	\decomp\ar[r, bend left=15pt, start anchor={[xshift=-4pt, 
	yshift=-2pt]}, "\el\circ\nerve"]\ar[r, bend right=15pt, start anchor={[xshift=-4pt, yshift=2pt]}, "\tw"']\ar[r, phantom, "\scriptstyle\Downarrow\lambda"]\ar[d, "\nerve"']&
	\smcat\ar[d, "\nerve"]\\
    \sset\ar[r, bend left=15pt, "\nerve\circ\el"]\ar[r, bend right=15pt, "\sd"']\ar[r, phantom, "\scriptstyle\Downarrow\Lambda"]&\sset
\end{tikzcd}
\]
\end{proposition}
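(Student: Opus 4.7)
The plan is to split the verification into two steps: first, establish that the component $\Lambda_{ND}$ descends to a functor $\lambda_D \colon \el(ND) \to \tw(D)$ between small categories; second, verify that the resulting $\lambda$ is cartesian.

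For the first step, I would invoke \Cref{lemma.tw_rFib}, which yields $\sd \circ N = N \circ \tw$ on decomposition spaces, while $N\el(ND)$ is, by construction, the nerve of the small category $\el(ND)$. Hence $\Lambda_{ND}$ is a simplicial map between nerves of categories, so full faithfulness of $N \colon \smcat \to \sset$ produces a unique functor $\lambda_D$ giving $\Lambda_{ND}$ after applying $N$. Naturality of $\lambda$ in $D$ then follows from naturality of $\Lambda$ together with full faithfulness of the nerve.

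For the second step, I would reduce to a pullback check in $\sset$: since $N \colon \smcat \to \sset$ preserves and reflects pullbacks, the square for $\lambda$ at a CULF map $F \colon E \to D$ is a pullback in $\smcat$ if and only if the naturality square of $\Lambda$ at $NF$ is a pullback in $\sset$. Checking the latter levelwise, at level $k$ I would use the formula from \Cref{lemma.fund_alt} that $\Lambda_X$ sends a pair $(\varphi, \sigma)$, where $\varphi = ([n_0] \xrightarrow{f_1} \cdots \xrightarrow{f_k} [n_k])$ and $\sigma \in X_{n_k}$, to $\alpha(f_k)^*(\sigma) \in X_{2k+1} = \sd(X)_k$. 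Since the top horizontal map of the level-$k$ naturality square preserves $\varphi$, that square decomposes as a disjoint union over $\varphi$ of squares
\[
\begin{tikzcd}
E_{n_k} \ar[r, "F_{n_k}"] \ar[d, "\alpha(f_k)^*"'] & D_{n_k} \ar[d, "\alpha(f_k)^*"] \\
E_{2k+1} \ar[r, "F_{2k+1}"'] & D_{2k+1}.
\end{tikzcd}
\]
Each of these is a pullback because $\alpha(f_k)$ is an active map, built so by \Cref{lemma.fund_alt}, and $F$ is CULF.

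The main anticipated obstacle is the bookkeeping step in recognising that the level-$k$ naturality square decomposes over $\varphi$ so cleanly; once \Cref{lemma.fund_alt} is used to make the $\varphi$-dependence of $\Lambda$ explicit, and the activeness of $\alpha(f_k)$ is noted, the cartesian property is reduced to the very definition of CULF, and the argument is complete.
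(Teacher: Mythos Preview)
Your proof is correct and follows essentially the same approach as the paper: both arguments lift $\Lambda$ to $\lambda$ via full faithfulness of the nerve, and both reduce cartesianness to the CULF lifting property of $F$ against the active maps $\alpha$ from \Cref{lemma.fund_alt}. The only cosmetic difference is that the paper checks the pullback directly in $\smcat$ on objects and morphisms (i.e.\ levels $0$ and $1$), whereas you pass to $\sset$ and verify it at every simplicial level $k$; since these are nerves of categories the extra levels are redundant, but the core idea is identical.
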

\begin{proof}

If $\dsD$ is a decomposition space, $\tw(\dsD)$ is a
category by \cref{lemma.tw_rFib}, and so is $\el(\nerve \dsD)$.
Because $\nerve\colon\smcat\to\sset$ is fully faithful, the natural
transformation $\Lambda$ lifts uniquely to a 
natural transformation $\lambda$ as shown.

It remains to show that for any CULF map $F\colon \ds{E}\to \dsD$, the diagram 
\begin{equation}\label{eqn.el_tw_nt}
\begin{tikzcd}
	\el(\nerve\ds{E})\ar[r, "\lambda_{\ds{E}}"]\ar[d, "\el(\nerve F)"']&
	\tw(\ds{E})\ar[d, "\tw(F)"]\\
	\el(\nerve \dsD)\ar[r, "\lambda_\dsD"']&
	\tw(\dsD)
\end{tikzcd}
\end{equation}
is cartesian.
On objects and morphisms respectively, this amounts to 
showing that a unique lift exists for any solid-arrow squares 
(arbitrary $\tau,\sigma,f$) as follows:
\[
\begin{tikzcd}
	\Delta[1]\ar[r, "\tau"]\ar[d, ->|, "\alpha(n)"']&
	\ds{E}\ar[d, "F"]\\
	\Delta[n]\ar[r, "\sigma"']\ar[ur, dotted]&
	\dsD
\end{tikzcd}
\hspace{1in}
\begin{tikzcd}
	\Delta[1]\ar[r, "{\QQQ_!\Delta[d^\top]}"]\ar[d, ->|, "\alpha(n_0)"']&[5pt]
	\Delta[3]\ar[r, "\tau"]\ar[d, ->|, "\alpha(f)"']&
	\ds{E}\ar[d, "F"]\\
	\Delta[n_0]\ar[r, "f"']&\Delta[n_1]\ar[r, "\sigma"']\ar[ur, dotted]&
	\dsD
\end{tikzcd}
\]
Here the $\alpha$'s denote
the unique active maps, as in \cref{lemma.fund_alt}. 
These two lifts do indeed exist uniquely because $F$ is CULF.
\end{proof}

\begin{remark}\label{rem.act-to-id}
  Let us spell out in explicit terms the functor $\lambda_\dsD \colon
\el(\nerve \dsD) \to \tw(\dsD)$ for $\dsD$ a decomposition space.
An object in $\el(\nerve \dsD)$ is a pair $([n],\sigma)$ with $[n]\in \sxcat$ and 
$\sigma\colon \Delta[n]\to \dsD$.  Now
$\lambda_\dsD( [n],\sigma )$ is
the long edge of $\sigma$, considered as an object in $\tw(\dsD)$.
An arrow in $\el(\nerve \dsD)$ from $([n'],\sigma')$ to $([n],\sigma)$ is
an arrow $f\colon[n'] \to [n]$ such that $\sigma' = f^* \sigma$. Such an arrow
is sent by $\lambda_\dsD$ to the $3$-simplex 
$\lambda_\dsD(f)=\sigma\circ\alpha(f)$ in the diagram
\[
\begin{tikzcd}[sep=large]
	&\Delta[1] \ar[d,  ->|, "{\QQQ_!\Delta[d^0]}"] &
	\\
	\Delta[1]
	\ar[r, "{\QQQ_!\Delta[d^1]}"]
	\ar[d, ->|, "{\alpha(n')}"']
	&[5pt]
	\Delta[3]
	\ar[rd, dotted, "{\lambda_\dsD(f)}"]
	\ar[d, ->|, "\alpha(f)"']&
	\\
	\Delta[n']\ar[r, "{\Delta[f]}"']&\Delta[n]\ar[r, "\sigma"']&
	\dsD
	\,,
\end{tikzcd}
\]
where the square is as in \cref{lemma.fund_alt} (and more specifically 
\cref{rem.long_edge}). This $\lambda_\dsD(f)$ is
regarded as an arrow in the category $\tw(\dsD)$ with domain 
$\Delta[1] \To{\QQQ_!\Delta[d^1]} \Delta[3] \To{\lambda_D(f)} \dsD$ 
and codomain
$\Delta[1] \To{\QQQ_!\Delta[d^0]} \Delta[3] \To{\lambda_D(f)} \dsD$, 
in the usual way.

In this situation, if $f\colon [n'] \activeto [n]$ is active, then $\lambda_\dsD(f)$ 
is an identity arrow in $\tw(\dsD)$, that is, a $3$-simplex in $\dsD$ of 
the
form ``$\QQQ_!\Delta[s^0]$ of a $1$-simplex'', as noted in
\cref{rem.long_edge}.
\end{remark}

\chapter{Proof of main theorem}\label{sec.proof_main}

\begin{lemma}\label{lemma.easy_rhombus}
  For every decomposition space $\dsD$, the following diagram commutes (up to isomorphism):
\begin{equation}\label{eqn.sliced}
	\begin{tikzcd}[column sep=large]
	  \decomp_{/\dsD}  \ar[r,"\nerve_\dsD"]\ar[rd, "\tw_\dsD"']& 
	  \sset_{/\nerve \dsD} 
	  \ar[r,"\el_{\nerve \dsD}"]\ar[rd, "\sd_{\nerve \dsD}"']& 
	  \RFib_{/\el(\nerve \dsD)} \ar[rd, "\QQQ^*_{\el (\nerve \dsD)}"] \\
	  & \RFib_{/\tw(\dsD)} \ar[r, "\nerve_{\tw (\dsD)}"'] & \sset_{/\sd 
	  (\nerve \dsD)} 
	  \ar[r, "\el_{\sd (\nerve \dsD)}"'] & 
     \RFib_{/\el(\sd \nerve \dsD)}
	\end{tikzcd}
\end{equation}
 where $\QQQ^*\colon\RFib\slice{\sxcat}\to\RFib\slice{\sxcat}$ is pullback along $\QQQ$.
\end{lemma}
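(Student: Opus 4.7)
The plan is to decompose the displayed hexagonal diagram into two four-vertex sub-rhombi, verify each separately, and then paste. The left sub-rhombus has vertices $\decomp_{/\dsD}$, $\sset_{/\nerve\dsD}$, $\RFib_{/\tw\dsD}$, $\sset_{/\sd\nerve\dsD}$ and is simply the sliced incarnation of \cref{lemma.tw_rFib}: for any CULF map $F\colon\ds{E}\to\dsD$, both paths produce the common morphism $\sd(\nerve F)=\nerve(\tw F)\colon\sd\nerve\ds{E}\to\sd\nerve\dsD$ in $\sset_{/\sd\nerve\dsD}$, and the same equality handles morphisms in $\decomp_{/\dsD}$ by functoriality of $\tw$, $\nerve$, and $\sd$.

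The right sub-rhombus, with vertices $\sset_{/\nerve\dsD}$, $\RFib_{/\el(\nerve\dsD)}$, $\sset_{/\sd\nerve\dsD}$, $\RFib_{/\el(\sd\nerve\dsD)}$, I would handle via the pasting lemma for pullbacks applied to \eqref{eqn.name_omega}. Given $g\colon Y\to\nerve\dsD$ in $\sset_{/\nerve\dsD}$, stack the naturality square for $\omega$ along $g$ on top of the pullback square \eqref{eqn.name_omega} for $\nerve\dsD$:
\[
\begin{tikzcd}
\el(\sd Y) \ar[r, "\omega_Y"] \ar[d, "\el(\sd g)"'] & \el(Y) \ar[d, "\el(g)"] \\
\el(\sd\nerve\dsD) \ar[r, "\omega_{\nerve\dsD}"'] \ar[d] & \el(\nerve\dsD) \ar[d] \\
\sxcat \ar[r, "\QQQ"'] & \sxcat
\end{tikzcd}
\]
The bottom square is a pullback by \eqref{eqn.name_omega} applied to $\nerve\dsD$; the outer rectangle is a pullback by \eqref{eqn.name_omega} applied to $Y$; hence the top square is a pullback by the pasting lemma. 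This exhibits $\el(\sd g)$ as the pullback of $\el(g)$ along $\omega_{\nerve\dsD}$, which is precisely what $\QQQ^*_{\el(\nerve\dsD)}$ computes on the object $\el(g)\in\RFib_{/\el(\nerve\dsD)}$.

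Pasting the two commutative sub-rhombi yields the full diagram. The only mildly delicate point is reading $\QQQ^*_{\el(\nerve\dsD)}$ as pullback along $\omega_{\nerve\dsD}$ rather than along $\QQQ$ itself; this identification is forced by \eqref{eqn.name_omega}, and stability of right fibrations under pullback keeps the output in $\RFib$. I therefore do not expect any genuine obstacle.
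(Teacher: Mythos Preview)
Your proposal is correct and follows essentially the same approach as the paper: both split the diagram into a left piece handled by \cref{lemma.tw_rFib} and a right piece handled by \eqref{eqn.name_omega}, then combine. The paper phrases this as ``establish the unsliced diagram, then slice over $\dsD$,'' whereas you work directly in the sliced setting and spell out, via the pullback pasting lemma, exactly why the right-hand square survives slicing; this is just a more explicit rendering of the same argument.
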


\begin{proof}
We already have the following (up-to-iso) commutative diagram:
\[
	\begin{tikzcd}[column sep=large, row sep=small]
	  \decomp  \ar[r,"\nerve"]\ar[rd, "\tw"']& \sset
	  \ar[r,"\el"]\ar[rd, "\sd"']&
	  \RFib_{/\sxcat} \ar[rd, "\QQQ^*"] \\
	  & \RFib \ar[r, "\nerve"'] & \sset 
	  \ar[r, "\el"'] & 
	  \RFib_{/\sxcat}  .
	\end{tikzcd}
\]
	Indeed, the left side side is \cref{lemma.tw_rFib}, and the
	right side is just the translation between simplicial sets and right 
	fibrations over $\sxcat$ given in \cref{eqn.name_omega}.  Now slice it over $\dsD$.  (As always 
	when slicing (up-to-iso) commutative diagrams,
	the result involves the 
	identifications already expressed by the diagrams.  In the 
	present case we use $\sd(\nerve \dsD) \cong \nerve \tw (\dsD)$ and
	$\QQQ^* (\el \nerve \dsD) \cong \el (\sd \nerve \dsD) \cong \el 
	(\nerve \tw \dsD)$.)
\end{proof}

A key ingredient in the proof of the main theorem is to see that
$\lambda_\dsD^*$ provides a sort of splitting of the double 
rhombus diagram from the previous lemma:

\begin{lemma}\label{lemma.split_rhombus}
  For any $\dsD\in\decomp$, the following diagram commutes up to
  natural isomorphism.
  \begin{equation}\label{eqn.prism}
	\begin{tikzcd}[column sep=large]
	  \decomp\slice{\dsD}  \ar[r,"\nerve_\dsD"]\ar[rd, "\tw_\dsD"']& 
	  \sset\slice{\nerve \dsD} 
	  \ar[r,"\el_{\nerve \dsD}"]&
	  \RFib_{/\el (\nerve \dsD)} \ar[rd, "\QQQ^*_{\el (\nerve \dsD)}"] \\
	  & \RFib_{/\tw(\dsD)} \ar[r, "\nerve_{\tw (\dsD)}"'] 
      \ar[ru, "\lambda_\dsD^*"'] 
 & \sset\slice{\sd (\nerve \dsD)}
	  \ar[r, "\el_{\sd (\nerve \dsD)}"'] & [5pt]
    \RFib\slice{\el(\sd \nerve \dsD)}   
	\end{tikzcd}
  \end{equation}
\end{lemma}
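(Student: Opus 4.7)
The plan is to verify the commutativity by splitting the rhombus of \cref{lemma.easy_rhombus} along the new diagonal $\lambda_\dsD^*$ into two triangles: an \emph{upper triangle} comparing $\el_{\nerve\dsD}\circ\nerve_\dsD$ with $\lambda_\dsD^*\circ\tw_\dsD$, and a \emph{lower triangle} comparing $\QQQ^*_{\el(\nerve\dsD)}\circ\lambda_\dsD^*$ with $\el_{\sd\nerve\dsD}\circ\nerve_{\tw\dsD}$. Each triangle will be checked separately; note that by the already-established outer commutativity of \cref{lemma.easy_rhombus}, the two triangles cannot be deduced from one another without extra input, so both require direct argument.

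For the upper triangle, the observation is that this is exactly \cref{prop.cart_nt} spelled out pointwise: given a CULF map $F\colon\ds{E}\to\dsD$, cartesianness of $\lambda$ asserts that the naturality square is a pullback, which is precisely the statement that $\el(\nerve\ds{E})\to\el(\nerve\dsD)$ is canonically isomorphic to $\lambda_\dsD^*\tw(F)\to\el(\nerve\dsD)$ as right fibrations over $\el(\nerve\dsD)$. Naturality in $F$ is automatic from the naturality of $\lambda$.

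For the lower triangle, I would first use pullback-pasting to rewrite the top composite as a single pullback: for a right fibration $q\colon R\to\tw\dsD$, the object $\QQQ^*_{\el(\nerve\dsD)}\lambda_\dsD^* R$ is the pullback of $R$ along the map $\lambda_\dsD\circ\omega_{\nerve\dsD}\colon\el(\sd\nerve\dsD)\to\tw\dsD$. The bottom composite $\el(\nerve R)\to\el(\sd\nerve\dsD)$ must likewise be identified with a pullback along some map $\el(\sd\nerve\dsD)\to\tw\dsD$: since $q$ is a right fibration, the unique-lifting property should give a natural isomorphism $\el(\nerve R)\cong\last_{\tw\dsD}^* R$, where $\last_{\tw\dsD}\colon\el(\nerve\tw\dsD)\to\tw\dsD$ denotes the last-vertex functor (the lift to categories of the simplicial map $\last_{\nerve\tw\dsD}$). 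With both composites expressed as base-change along maps into $\tw\dsD$, commutativity reduces to the identity
\[
\lambda_\dsD\circ\omega_{\nerve\dsD}=\last_{\tw\dsD},
\]
which is exactly \cref{lemma.last_nerve_Lambda} at $X=\nerve\dsD$, lifted from simplicial sets to categories via the identification $\sd\nerve\dsD\cong\nerve\tw\dsD$ guaranteed by \cref{lemma.tw_rFib}.

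The main obstacle will be the general identification $\el(\nerve R)\cong\last_\cat{C}^* R$ for a right fibration $q\colon R\to\cat{C}$, since it is what converts the ``categorical'' construction $\el\circ\nerve$ on slices into a base-change functor directly comparable with the pullback $\lambda_\dsD^*$. While this is essentially a restatement of the unique right-lifting property, it is the geometric heart of the argument: the rest of the proof is a routine assembly of naturalities already available from \cref{lemma.last_nerve_Lambda} and \cref{prop.cart_nt}.
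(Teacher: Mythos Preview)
Your proposal is correct and follows essentially the same route as the paper: the left triangle is dispatched by cartesianness of $\lambda$ (\cref{prop.cart_nt}), and for the right triangle the paper likewise reduces to pullback along $\lambda_\dsD\circ\omega_{\nerve\dsD}$, invokes \cref{lemma.last_nerve_Lambda} to identify this with the last-vertex map, and then verifies $\el(\nerve\cat{F})\cong\last^*\cat{F}$ by a direct fibrewise check using the right-fibration lifting property. The only cosmetic difference is that you isolate the identification $\el(\nerve R)\cong\last_{\cat{C}}^*R$ as a general lemma about right fibrations, whereas the paper performs this check inline for $\cat{C}=\tw(\dsD)$.
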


\begin{proof}
  The commutativity of the left square
  follows immediately from the fact that $\lambda$ is cartesian, 
  cf.~\cref{prop.cart_nt}; see in particular the pullback square in \cref{eqn.el_tw_nt}.
  
For the right square,
  let $p\colon \cat{F} \to \tw(\dsD)$ be a right fibration. We must establish a map
  $\el(\nerve \cat{F})\to\QQQ^*_{\el (\nerve \dsD)}(\lambda_\dsD^*\cat{F})$ and show it is an isomorphism over $\el(\sd \nerve \dsD)$.
    By \cref{eqn.name_omega}, the functor $\QQQ^*_{\el (\nerve \dsD)}$ is pullback along $\omega_{\nerve\dsD}$ in the following diagram of categories:
  	\[
	\begin{tikzcd}
		\cat{F}' \drpullback \ar[r]\ar[d, "p'"']& \cdot 
		\drpullback \ar[d]\ar[r] & \cat{F} \ar[d, "p"]\\
		\el(\sd\nerve \dsD) \drpullback \ar[r, "\omega_{\nerve \dsD}"] \ar[d] & \el(\nerve \dsD) \ar[r, 
		"\lambda_\dsD"] \ar[d] & \tw \dsD \\
		\sxcat \ar[r, "\QQQ"'] & \sxcat
	\end{tikzcd}
	\]	
Thus we have $\cat{F}'\cong\QQQ^*_{\el (\nerve \dsD)}(\lambda_\dsD^*\cat{F})$.
The nerve of the middle composite,
\[
\nerve\el(\sd\nerve \dsD)\To{\nerve\omega_{\nerve 
\dsD}}\nerve\el(\nerve \dsD)\To{\Lambda\nerve}\sd(\nerve \dsD) ,
\]
is identified with
$\last_{\sd(\nerve \dsD)}$
by \cref{lemma.last_nerve_Lambda,prop.cart_nt}, 
which in particular gives
$\nerve\lambda\cong\Lambda\nerve$.
The naturality square for the last-vertex map
\[
\begin{tikzcd}[column sep=large]
	\nerve\el(\nerve \cat{F})\ar[r, "\last_{\nerve \cat{F}}"]\ar[d, 
	"\nerve\el(\nerve p)"']&
	\nerve \cat{F}\ar[d, "\nerve p"]\\
	\nerve\el(\sd\nerve \dsD)\ar[r, "\last_{\sd(\nerve \dsD)}"']&
	\sd(\nerve \dsD)
\end{tikzcd}
\]
induces a morphism $\nerve\el(\nerve \cat{F})\to \nerve \cat{F}'$ by the
universality of $\cat{F}'$ as a pullback, and the fact that $\nerve\colon\smcat\to\sset$ preserves
limits.  Since $\nerve$ is fully faithful, we obtain our desired
comparison map $u\colon\el(\nerve \cat{F})\to \cat{F}'$ of discrete
fibrations over $\el(\sd\nerve \dsD)$.  It is enough to check that the
restriction of $u$ to each fiber is a bijection; to do so we describe
these fibers and the map $u$ between them in concrete terms.

An object in $\el(\sd \nerve \dsD)$ is a pair $([n],\sigma)$ where
$\sigma\colon\Delta[n]\to\sd(\nerve \dsD)$.  As an $n$-simplex in $\tw(\dsD)$,
it is sent by $\omega_{\nerve \dsD}$ to the same pair $([n],\sigma)$,
but where now $\sigma$ is considered a $(2n{+}1)$-simplex in $\dsD$.
Applying $\lambda_\dsD$ returns the long edge of the simplex, $\Delta[1]
\activeto \Delta[2n{+}1] \to \dsD$, which we denote $\ell \in \ob
\tw(\dsD)$.  The fiber of $\cat{F}'$ over $\sigma$ is the $p$-fiber
over $\ell$, that is the discrete set $\{z\in \ob \cat{F} \mid
p(z)=\ell\}$.  In other words, we can identify an object in this fiber
with a commutative diagram
\begin{equation}\label{eqn.fibration_last}
\begin{tikzcd}
	{\Delta[0]}\ar[r, "z"]\ar[d, "n"']&
	\cat{F}\ar[d, "p"]\\
	{\Delta[n]}\ar[r, "\sigma"']&
	\sd(\nerve \dsD)  .
\end{tikzcd}
\end{equation}
  On the other hand, an object in $\el(\nerve \cat{F})$ over $\sigma$
  can be identified with an $n$-chain of arrows $z_0\to \cdots \to
  z_n$ in $\cat{F}$, lying over $\sigma$.  The comparison functor
  $u\colon \el(\nerve \cat{F}) \to \cat{F}'$ sends such an $n$-chain
  to its last element, $z_n$.  Thus it suffices to show that there is
  a unique lift in \cref{eqn.fibration_last}.  But this is exactly the
  condition that $p$ is a right fibration.
\end{proof}

\begin{corollary}\label{cor.main}
With notation as in \cref{eqn.sliced,eqn.prism}, we have natural isomorphisms
\[
	\partial_{\sd(\nerve\dsD)}\circ Q^*_{\el(\nerve\dsD)}\cong
	\sd_{\nerve\dsD}\circ\partial_{\nerve\dsD}
	\qquad\text{and}\qquad
	\nerve_{\tw (\dsD)}\cong
	\partial_{\sd (\nerve \dsD)}\circ \QQQ^*_{\el 
	(\nerve \dsD)}\circ\lambda_\dsD^*.
\]
\end{corollary}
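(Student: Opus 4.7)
My overall strategy is that both isomorphisms follow as short formal cancellations from commutative diagrams already established, using the fundamental fact that $\partial$ restricts to an equivalence $\RFib\slice{\cat{C}}\simeq\Psh(\cat{C})$ with inverse $\ourint$. Under the standard equivalence $\sset\slice{Z}\simeq\Psh(\el Z)$ at any simplicial set $Z$, the sliced functors $\el_Z$ and $\partial_Z$ appearing in \cref{eqn.sliced,eqn.prism} correspond, at base $\el Z$, to $\ourint$ and $\partial$, and hence remain mutually inverse equivalences on right fibrations. I would open the proof by recording this observation, since it is the workhorse for both cancellations.

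For the first isomorphism, I would start from the outer commutativity of the rhombus in \cref{lemma.easy_rhombus}, namely
\[
\el_{\sd(\nerve\dsD)}\circ\sd_{\nerve\dsD}\cong Q^*_{\el(\nerve\dsD)}\circ\el_{\nerve\dsD}.
\]
Precomposing both sides with $\partial_{\nerve\dsD}$ and invoking $\el_{\nerve\dsD}\circ\partial_{\nerve\dsD}\cong\id$ absorbs the $\el_{\nerve\dsD}$ on the right; then postcomposing with $\partial_{\sd(\nerve\dsD)}$ and invoking $\partial_{\sd(\nerve\dsD)}\circ\el_{\sd(\nerve\dsD)}\cong\id$ absorbs the $\el_{\sd(\nerve\dsD)}$ on the left. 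The result is exactly $\sd_{\nerve\dsD}\circ\partial_{\nerve\dsD}\cong\partial_{\sd(\nerve\dsD)}\circ Q^*_{\el(\nerve\dsD)}$.

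For the second isomorphism, I would read off the right square of \cref{eqn.prism} from \cref{lemma.split_rhombus}, namely
\[
Q^*_{\el(\nerve\dsD)}\circ\lambda_\dsD^*\cong\el_{\sd(\nerve\dsD)}\circ\nerve_{\tw(\dsD)}.
\]
Postcomposing both sides with $\partial_{\sd(\nerve\dsD)}$ and cancelling $\el_{\sd(\nerve\dsD)}$ on the right-hand side directly produces $\nerve_{\tw(\dsD)}\cong\partial_{\sd(\nerve\dsD)}\circ Q^*_{\el(\nerve\dsD)}\circ\lambda_\dsD^*$.

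The only step likely to require any care---and where I expect the modest bookkeeping obstacle to lie---is verifying that the sliced $\partial_Z$ and $\el_Z$ really are mutually inverse equivalences on right fibrations over $\el Z$ (equivalently, that the cancellations $\partial\circ\el\cong\id$ and $\el\circ\partial\cong\id$ are natural in the base and persist under slicing). This I expect to be routine, since the equivalence $\sset\slice{Z}\simeq\Psh(\el Z)$ identifies the sliced setting with $\RFib\slice{\el Z}\simeq\Psh(\el Z)$, the latter being a special case of the equivalence already quoted from the Preliminaries. Once that is recorded, both isomorphisms are two-step formal manipulations.
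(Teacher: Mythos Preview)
Your proposal is correct and is essentially the paper's own proof, just spelled out in more detail: the paper's one-line argument invokes exactly the right-hand squares of \cref{eqn.sliced} and \cref{eqn.prism} together with the fact that the sliced $\el$ and $\partial$ are inverse equivalences, and your two cancellation chains make this explicit. Your remark that the only care needed is the routine check that $\el_Z$ and $\partial_Z$ are mutually inverse on right fibrations is apt and matches what the paper is tacitly using.
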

\begin{proof}
Using that $\partial$ and $\el$ are inverse equivalences of categories between presheaves and right fibrations, these statements follow from 
\cref{eqn.sliced,eqn.prism}, right parts.
\end{proof}

\begin{lemma}\label{lemma.def_untw}
  For any decomposition space $\dsD$, 
  there exists a functor
  $\untw{\dsD}\colon\RFib\slice{\tw(\dsD)}\to\decomp\slice{\dsD}$ 
  with
  \[
  \untw{\dsD} \circ \tw_\dsD \cong \id_{\decomp\slice{\dsD}} .
  \]
  Furthermore, both the left-to-right and the right-to-left squares commute up to natural isomorphism:
\begin{equation}\label{eqn.forward_backward}
\begin{tikzcd}[sep=large]
	\decomp\slice{\dsD}\ar[r, bend left=10pt, "\tw_\dsD"]\ar[d, "\nerve_\dsD"']&
	\RFib\slice{\tw(\dsD)}\ar[d, "\lambda_\dsD^*"]\ar[l, bend left=10pt, dotted, "\untw{\dsD}"]\\\
	\sset\slice{\nerve \dsD}\ar[r, bend left=10pt, "\el_{\nerve \dsD}"]&
	\RFib\slice{\el(\nerve \dsD)}   . 
	\ar[l, bend left=10pt, "\partial_{\nerve \dsD}"]
\end{tikzcd}
\end{equation}
\end{lemma}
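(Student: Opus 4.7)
The plan is to define $\untw{\dsD}$ as the (essentially unique) factorization through $\decomp\slice{\dsD}$ of the composite
\[
U \coloneqq \partial_{\nerve\dsD}\circ\lambda_\dsD^* \colon \RFib\slice{\tw(\dsD)}\to\sset\slice{\nerve\dsD}.
\]
With this definition, the right-to-left (backward) square in \cref{eqn.forward_backward} commutes tautologically, and the left-to-right (forward) square is precisely the left rhombus of \cref{lemma.split_rhombus}. Thus the only substantive step is to verify that $U$ lands in the essential image of the fully faithful inclusion $\nerve_\dsD$; granting this, the general fact (page~\pageref{page.CULF_over_decomp}) that a CULF map into a decomposition space has decomposition-space source furnishes the required factorization $\untw{\dsD}$ uniquely up to isomorphism.

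The main obstacle is checking that $U(p)\to\nerve\dsD$ is CULF for every right fibration $p\colon\cat{F}\to\tw(\dsD)$, and this is where \cref{rem.act-to-id} carries the weight. Under the equivalences $\sset\slice{\nerve\dsD}\simeq\Psh(\el(\nerve\dsD))\simeq\RFib\slice{\el(\nerve\dsD)}$, the fiber of $U(p)\to\nerve\dsD$ over an $n$-simplex $\sigma\colon\Delta[n]\to\nerve\dsD$ is identified with the $p$-fiber of $\cat{F}$ over $\lambda_\dsD([n],\sigma)$, i.e.\ over the long edge of $\sigma$. For an active map $\alpha\colon[m]\activeto[n]$, the naturality comparison between the fibers over $\sigma$ and over $\alpha^*\sigma$ is induced by $p$ applied to $\lambda_\dsD$ of the active morphism $\alpha\colon([m],\alpha^*\sigma)\to([n],\sigma)$ in $\el(\nerve\dsD)$. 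By \cref{rem.act-to-id}, $\lambda_\dsD$ sends such active morphisms to identities in $\tw(\dsD)$, so this comparison is the identity, giving the required bijection of fibers.

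Finally, the isomorphism $\untw{\dsD}\circ\tw_\dsD\cong\id_{\decomp\slice{\dsD}}$ will follow formally: applying the fully faithful $\nerve_\dsD$ and using in turn the backward square, the forward square, and that $\partial_{\nerve\dsD}$ is inverse to $\el_{\nerve\dsD}$, we obtain
\[
\nerve_\dsD\circ\untw{\dsD}\circ\tw_\dsD
\;\cong\;\partial_{\nerve\dsD}\circ\lambda_\dsD^*\circ\tw_\dsD
\;\cong\;\partial_{\nerve\dsD}\circ\el_{\nerve\dsD}\circ\nerve_\dsD
\;\cong\;\nerve_\dsD,
\]
which descends to $\untw{\dsD}\circ\tw_\dsD\cong\id$ in $\decomp\slice{\dsD}$ by full faithfulness.
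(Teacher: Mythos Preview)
Your proposal is correct and follows essentially the same approach as the paper: define $\untw{\dsD}$ as the lift of $\partial_{\nerve\dsD}\circ\lambda_\dsD^*$ through $\nerve_\dsD$, verify CULFness of $\partial_{\nerve\dsD}\lambda_\dsD^*(p)$ by using \cref{rem.act-to-id} to see that active maps induce identities on fibers, and deduce $\untw{\dsD}\circ\tw_\dsD\cong\id$ from the two commuting squares via full faithfulness of $\nerve_\dsD$. The paper packages the CULFness check by restricting to a right fibration over $\sxcat_{\operatorname{active}}$, but the substance of the fiber comparison and the concluding chain of isomorphisms is exactly as you wrote.
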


\begin{proof}
  We already know that the left-to-right square commutes by \cref{lemma.split_rhombus} (left part).
  To define $\untw{\dsD}$ making the right-to-left square commute, it
  suffices to show that for any right fibration $p\colon
  \cat{F}\to\tw(\dsD)$, the simplicial map $\partial_{\nerve
  \dsD}\lambda_\dsD^*(p)$ is CULF, because then it lands in $\decomp$ 
  (see the preliminaries on decomposition spaces, p.\pageref{page.CULF_over_decomp}).
  CULFness amounts to certain squares being pullbacks. To 
  express this cleanly, 
  consider 
  the right fibration $q\colon U \to V$ 
  defined by the following pullbacks:
%
  	\[
	\begin{tikzcd}
		U \drpullback \ar[r]\ar[d, "q"']& \cdot 
		\drpullback \ar[d]\ar[r] & \cat{F} \ar[d, "p"]\\
		V \drpullback \ar[r] \ar[d] & \el(\nerve \dsD) \ar[r, 
		"\lambda_\dsD"'] \ar[d] & \tw(\dsD) \\
		\sxcat_{\operatorname{active}} \ar[r, "\text{incl}"'] & 
		\sxcat 
	\end{tikzcd}
	\]
  The statement is that the right fibration $q\colon U \to V$ 
corresponds to a {\em cartesian} natural transformation 
  \[
\begin{tikzcd}[column sep=large]
	\sxcat_{\operatorname{active}}\op\ar[r, bend left=20pt, "\partial U"]\ar[r, bend 
	right=20pt, 
	"\partial V"']\ar[r, phantom, "\scriptstyle\Downarrow"]&
	\smset ,
\end{tikzcd}
\]
  and this condition in turn can be read off directly on $q$: we need to check
  that for every active map $f \colon [m] \activeto [n]$
  the following square is a pullback of sets:
  	\[
	\begin{tikzcd}
       U_{m}  \ar[d, "q_{m}"']& U_n  \ar[l, "f^*"'] 
	   \ar[d, "q_{n}"] \\
	   V_{m} & V_n    .   
	   \ar[l, "f^*"] 
   \end{tikzcd}
   \]
  We compare the $q_n$-fiber over a point
  $\Delta[n]\stackrel\sigma\to \dsD$ in $V_n$ with the $q_m$-fiber 
  over
  the corresponding point $\Delta[m] \stackrel{\Delta[f]}\to \Delta[n]
  \stackrel\sigma\to \dsD$ in $V_m$. 
  We do this by computing
  these $q$-fibers in terms of $p$-fibers, using the explicit 
  description of $\lambda_\dsD$ from \cref{rem.act-to-id}. But we 
  already noted in \cref{rem.act-to-id} that
  $\lambda_\dsD$ sends any arrow in $\el(\nerve\dsD)$ lying over an 
  active map to an identity, so the $q_n$ and $q_m$ 
  fibers coincide for every $\sigma\colon \Delta[n] \to \dsD$ and 
  every active map $f\colon [m]\activeto [n]$, and therefore the square is
  a pullback.
     
  Finally the main statement follows easily: 
  we first read off from the commutativity of \cref{eqn.forward_backward} that
\[
\nerve_{\dsD} \circ \untw{\dsD} \circ \tw_\dsD \cong
\partial_{\nerve \dsD} \circ \lambda_\dsD^* \circ \tw_\dsD \cong \partial_{\nerve \dsD} \circ 
\el_{\nerve \dsD} \circ \nerve_{\dsD}
\cong \nerve_{\dsD}.
\]
Since the nerve functor is fully faithful, so is the slice 
$\nerve_{\dsD}$, and we have established $\untw{\dsD} \circ \tw_\dsD \cong \id_{\decomp\slice{\dsD}}$.
\end{proof}

\begin{theorem}[Main theorem]\label{thm.main}
For any decomposition space $\dsD$, we have natural inverse equivalences of categories
\[
\begin{tikzcd}[column sep=large]
	\decomp\slice{\dsD}\ar[r, shift left, "\tw_\dsD"]&
	\RFib\slice{\tw(\dsD)}.\ar[l, shift left, "\untw{\dsD}"]
\end{tikzcd}
\]
\end{theorem}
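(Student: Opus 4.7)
The plan is to combine \cref{lemma.def_untw} with \cref{cor.main}, using fully faithfulness of the slice nerve functor to reduce to a chain of natural isomorphisms already in hand. \cref{lemma.def_untw} already establishes $\untw{\dsD} \circ \tw_\dsD \cong \id_{\decomp\slice{\dsD}}$, so only the reverse composition $\tw_\dsD \circ \untw{\dsD} \cong \id_{\RFib\slice{\tw(\dsD)}}$ remains.

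The key leverage is that $\nerve_{\tw(\dsD)} \colon \RFib\slice{\tw(\dsD)} \to \sset\slice{\sd(\nerve\dsD)}$ is fully faithful: the nerve $\nerve\colon\smcat\to\sset$ is fully faithful, hence so is its slice over $\tw(\dsD)$, and the further restriction along the full inclusion $\RFib\slice{\tw(\dsD)} \hookrightarrow \smcat\slice{\tw(\dsD)}$ stays fully faithful. A fully faithful functor reflects natural isomorphisms between functors into its source, so it suffices to produce a natural isomorphism $\nerve_{\tw(\dsD)} \cong \nerve_{\tw(\dsD)} \circ \tw_\dsD \circ \untw{\dsD}$.

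To assemble this, I would string together three ingredients already proved. First, \cref{cor.main} gives
\[
\nerve_{\tw(\dsD)} \cong \partial_{\sd(\nerve\dsD)} \circ \QQQ^*_{\el(\nerve\dsD)} \circ \lambda_\dsD^* \cong \sd_{\nerve\dsD} \circ \partial_{\nerve\dsD} \circ \lambda_\dsD^*.
\]
Next, the right-hand square of \cref{eqn.forward_backward} in \cref{lemma.def_untw} rewrites $\partial_{\nerve\dsD} \circ \lambda_\dsD^*$ as $\nerve_\dsD \circ \untw{\dsD}$. Finally, the left-hand triangle of \cref{lemma.easy_rhombus} identifies $\sd_{\nerve\dsD} \circ \nerve_\dsD$ with $\nerve_{\tw(\dsD)} \circ \tw_\dsD$ on $\decomp\slice{\dsD}$. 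Composing these three natural isomorphisms yields $\nerve_{\tw(\dsD)} \cong \nerve_{\tw(\dsD)} \circ \tw_\dsD \circ \untw{\dsD}$, and fully faithfulness of $\nerve_{\tw(\dsD)}$ produces the required $\id \cong \tw_\dsD \circ \untw{\dsD}$.

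With the preparatory lemmas in place, I do not anticipate a serious obstacle at this step: the argument is a pure diagram chase. The only real care needed is bookkeeping — confirming that each isomorphism in the chain is genuinely natural in the right fibration $p\colon\cat{F}\to\tw(\dsD)$ rather than only in $\dsD$, and that the various slice identifications such as $\sd(\nerve\dsD) \cong \nerve(\tw(\dsD))$ line up as intended.
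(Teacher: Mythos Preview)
Your proposal is correct and is essentially the paper's own proof: the paper packages the same three ingredients (\cref{cor.main}, the right-to-left square of \cref{lemma.def_untw}, and the left rhombus of \cref{lemma.easy_rhombus}) into a single commutative diagram whose outer square is $\nerve_{\tw(\dsD)}\circ\tw_\dsD\circ\untw{\dsD}\cong\nerve_{\tw(\dsD)}$, and then invokes fully faithfulness of the nerve exactly as you do. Your linear chain of isomorphisms is just the diagram unwound.
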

\begin{proof}
We established $\untw{\dsD} \circ \tw_\dsD \cong \id_{\decomp\slice{\dsD}}$ in
\cref{lemma.def_untw}.  For the other direction, since nerve is fully
faithful, it suffices to prove that
$\nerve_{\tw(\dsD)}\circ\tw_\dsD\circ\untw{\dsD}\cong\nerve_{\tw(\dsD)}$;
this is the outer square
in the diagram below:
  \[
  \begin{tikzcd}
	\RFib\slice{\tw(\dsD)}
		\ar[rrrr, "\nerve_{\tw(\dsD)}"]
		\ar[dddd, "\untw{\dsD}"']
		\ar[dr, "\lambda_\dsD^*"']&&
	{}&&
	\sset\slice{\sd (\nerve \dsD)}
	\\
	&\RFib\slice{\el (\nerve \dsD)}
		\ar[dd, phantom, xshift=-7ex, yshift=-1ex, "\fbox{\ref{lemma.def_untw}}"]
		\ar[rr, "Q^*_{\el (\nerve \dsD)}"']
		\ar[ddrr, "\partial_{\nerve \dsD}"']&
	{}
		\ar[u, phantom, yshift=0.6ex, "\fbox{\ref{cor.main}}"]&
	\RFib\slice{\el (\sd \nerve \dsD)}
		\ar[ur, "\partial_{\sd (\nerve \dsD)}"]
		\ar[dd, phantom, "\fbox{\ref{cor.main}}"]
	\\
	&&&&&
	\\[-10pt]
	&{}&&\sset\slice{\nerve \dsD}
		\ar[uuur, bend right=10pt, "\sd_{\nerve \dsD}"']
		\ar[dr, phantom, xshift=2ex, yshift=2ex, "\fbox{\ref{lemma.easy_rhombus}}"]
	\\[-5pt]
	\decomp\slice{\dsD}
		\ar[urrr, "\nerve_\dsD"]
		\ar[rrrr, "\tw_\dsD"']&&&&
	\RFib\slice{\tw(\dsD)}
		\ar[uuuu, "\nerve_{\tw(\dsD)}"']
  \end{tikzcd}
  \]
The up-to-iso commutativity of each of the squares inside was proven earlier as indicated.
\end{proof}

\begin{remark}
  All the proof ingredients are readily seen to be natural in $\dsD$.  
  In fact the main theorem can be seen as the $\dsD$-component of a 
  natural equivalence of $\smcat$-valued functors
  \[
  \begin{tikzcd}[column sep=large]
	  \decomp\ar[r, bend left=20pt, start anchor={[xshift=-4pt, yshift=-3pt]}, "\decomp\slice{-}"]\ar[r, bend right=20pt, start anchor={[xshift=-4pt, yshift=3pt]},  "\RFib\slice{\tw(-)}"']&
	  \smcat\ar[l, phantom, pos=.6, "\simeq"]   .
  \end{tikzcd}
  \]
\end{remark}

\begin{corollary}
  For $\dsD$ a decomposition space and  $\cat{F}\to \tw(\dsD)$ a right fibration,
  $\cat{F}$ is again the twisted arrow category of a decomposition space.
\end{corollary}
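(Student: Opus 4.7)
The plan is to deduce this corollary as an immediate consequence of the main theorem (\cref{thm.main}), which provides the equivalence $\decomp\slice{\dsD} \simeq \RFib\slice{\tw(\dsD)}$ with inverse functors $\tw_\dsD$ and $\untw{\dsD}$. Given a right fibration $p\colon \cat{F}\to\tw(\dsD)$, I would first apply $\untw{\dsD}$ to obtain an object $\untw{\dsD}(p)\colon \ds{E}\to \dsD$ of $\decomp\slice{\dsD}$, that is, a CULF map from some decomposition space $\ds{E}$ to $\dsD$.

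Next, I would invoke the counit isomorphism of the equivalence, namely $\tw_\dsD\circ\untw{\dsD}\cong\id_{\RFib\slice{\tw(\dsD)}}$ (the ``other direction'' proved at the end of \cref{thm.main}). Applied to $p$, this gives a natural isomorphism $\tw(\ds{E}) \cong \cat{F}$ over $\tw(\dsD)$. In particular, $\cat{F}$ is the twisted arrow category of the decomposition space $\ds{E}=\untw{\dsD}(\cat{F})$, which is the required conclusion.

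I should also remark that it is automatic that $\ds{E}$ is itself a decomposition space: by construction $\untw{\dsD}(p)$ is a CULF map with codomain the decomposition space $\dsD$, and CULF maps into decomposition spaces have decomposition-space domains (as noted on \cpageref{page.CULF_over_decomp}). There is no real obstacle here; the entire content has already been absorbed into \cref{lemma.def_untw} and \cref{thm.main}. The only thing worth being careful about is that the identification $\cat{F}\cong\tw(\ds{E})$ is canonical and compatible with the structure maps to $\tw(\dsD)$, but this is precisely what the naturality of the counit isomorphism delivers.
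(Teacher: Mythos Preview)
Your proposal is correct and matches the paper's intended reasoning: the corollary is stated without proof precisely because it is an immediate consequence of \cref{thm.main}, obtained by applying $\untw{\dsD}$ and then the counit isomorphism $\tw_\dsD\circ\untw{\dsD}\cong\id$, exactly as you describe.
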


\begin{corollary}
The category $\decomp$ is locally cartesian closed.
\end{corollary}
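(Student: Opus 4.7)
The plan is to read this off immediately from the Main Theorem plus standard topos theory. By the preliminaries, the adjunction $\partial \dashv \ourint$ restricts to an equivalence $\RFib\slice{\cat{C}} \simeq \Psh(\cat{C})$ for any small category $\cat{C}$. Composing with the equivalence $\tw_\dsD \colon \decomp\slice{\dsD} \isopil \RFib\slice{\tw(\dsD)}$ from \cref{thm.main} gives
\[
\decomp\slice{\dsD} \simeq \Psh(\tw(\dsD)),
\]
so each slice of $\decomp$ is (equivalent to) a presheaf topos, and in particular is cartesian closed.

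To conclude local cartesian closedness, I need to know that every \emph{iterated} slice of $\decomp$ is cartesian closed as well. So given any CULF map $F \colon \ds{E} \to \dsD$, I would use the standard identification of iterated slices
\[
(\decomp\slice{\dsD})\slice{F} \simeq \decomp\slice{\ds{E}},
\]
which is valid because $\ds{E}$ is itself an object of $\decomp$: a simplicial set CULF over a decomposition space is automatically a decomposition space, as recalled on page \pageref{page.CULF_over_decomp}. Applying the previous paragraph to $\ds{E}$ then shows $(\decomp\slice{\dsD})\slice{F} \simeq \Psh(\tw(\ds{E}))$ is cartesian closed.

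Since every slice of $\decomp$ is cartesian closed (and has all finite limits, being a topos), it follows by the usual characterization that for every morphism $F \colon \ds{E} \to \dsD$ in $\decomp$ the pullback functor $F^*$ admits a right adjoint, i.e.\ $\decomp$ is locally cartesian closed.

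There is essentially no obstacle here; the only mild subtlety is confirming that the iterated-slice identification lands back inside $\decomp$, which is handled by the CULF-closure property noted above. Everything else is a purely formal consequence of \cref{thm.main} and the fact that presheaf categories are toposes.
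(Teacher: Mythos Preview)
Your argument is correct and matches the paper's intent: the corollary is stated without proof, as an immediate consequence of \cref{thm.main} together with the fact that presheaf categories are toposes. Your middle paragraph on iterated slices is not strictly needed---once every slice $\decomp\slice{\dsD}$ is known to be cartesian closed with finite limits, the usual equivalence $(\decomp\slice{\dsD})\slice{F}\simeq\decomp\slice{\ds{E}}$ already identifies $F^*$ with ``product by $F$'' in $\decomp\slice{\dsD}$, whose right adjoint exists by cartesian closedness---but it is harmless and your observation that $\ds{E}$ is again a decomposition space is exactly what makes the slice identification stay within $\decomp$.
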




\Addresses
\vspace{-.2in}

\end{document}